\newtheorem{thm}{Theorem}[section]
\newtheorem*{thm*}{Theorem}
\newtheorem{lem}[thm]{Lemma}
\newtheorem{cor}[thm]{Corollary}
\newtheorem{prop}[thm]{Proposition}
\theoremstyle{definition} 
\newtheorem{defn}[thm]{Definition}
\theoremstyle{remark} 
\newtheorem{rem}[thm]{Remark}
\newtheorem{q}[thm]{Question}
\newcommand{\secref}[1]{Section~\textup{\ref{#1}}}
\newcommand{\thmref}[1]{Theorem~\textup{\ref{#1}}}
\newcommand{\corref}[1]{Corollary~\textup{\ref{#1}}}
\newcommand{\lemref}[1]{Lemma~\textup{\ref{#1}}}
\newcommand{\propref}[1]{Proposition~\textup{\ref{#1}}}
\numberwithin{equation}{section}
\newcommand{\midtext}[1]{\quad\text{#1}\quad}
\newcommand{\righttext}[1]{\quad\text{#1 }}
\renewcommand{\and}{\midtext{and}}
\newcommand{\C}{\mathbb C}
\newcommand{\T}{\mathbb T}
\newcommand{\CC}{\mathcal C}
\newcommand{\KK}{\mathcal K}
\newcommand{\II}{\mathcal I}
\newcommand{\UU}{\mathcal U}
\renewcommand{\epsilon}{\varepsilon}
\DeclareMathOperator{\aut}{Aut}
\DeclareMathOperator{\ad}{Ad}
\DeclareMathOperator*{\spn}{span}
\DeclareMathOperator*{\clspn}{\overline{\spn}}
\newcommand{\id}{\text{\textup{id}}}
\renewcommand{\>}{\rangle}
\renewcommand{\iff}{\ensuremath{\Leftrightarrow}}
\newcommand{\then}{\ensuremath{\Rightarrow}}
\newcommand{\inv}{^{-1}}
\newcommand{\variso}{\overset{\simeq}{\longrightarrow}}
\renewcommand{\bar}{\overline}
\newcommand{\what}{\widehat}
\renewcommand{\:}{\colon}
\newcounter{nogo}
\newcommand{\nogothm}{\stepcounter{nogo}No-Go Theorem~\thenogo}
\begin{document}
\title[Rigidity for $C^*$-dynamical systems]{Rigidity theory for $C^*$-dynamical systems and the ``Pedersen Rigidity Problem''}
\author[Kaliszewski]{S.~Kaliszewski}
\address{School of Mathematical and Statistical Sciences
\\Arizona State University
\\Tempe, Arizona 85287}
\email{kaliszewski@asu.edu}
\author[Omland]{Tron Omland}
\address{Department of Mathematics
\\University of Oslo
\\P.O. Box 1053 Blindern
\\NO-0316 Oslo
\\Norway}
\email{trono@math.uio.no}
\author[Quigg]{John Quigg}
\address{School of Mathematical and Statistical Sciences
\\Arizona State University
\\Tempe, Arizona 85287}
\email{quigg@asu.edu}

\date{\today}
\date{December 29, 2017}

\thanks{The second author is funded by the Research Council of Norway through FRINATEK, project no.~240913.}

\subjclass[2010]{Primary 46L55}
\keywords{action, crossed-product, exterior equivalence, outer conjugacy, generalized fixed-point algebra}

\begin{abstract}
Let $G$ be a locally compact abelian group.
By modifying a theorem of Pedersen,
it follows that
actions
of $G$
on $C^*$-algebras $A$ and $B$
are outer conjugate
if and only if there is an isomorphism
of the crossed products
that is equivariant for
the dual actions
and preserves the images of $A$ and $B$ in the multiplier algebras of the crossed products.
The rigidity problem discussed in this paper
deals with the necessity of the last condition
concerning the images of $A$ and $B$.

There
is an alternative formulation of the problem:
an action of the dual group $\what G$
together with a suitably equivariant unitary homomorphism of $G$
give rise to a generalized fixed-point algebra
via Landstad's theorem,
and 
a 
problem related to the above is to produce an action of $\what G$
and two such equivariant unitary homomorphisms of $G$
that give
distinct generalized fixed-point algebras.

We present several situations where the condition
on the images of $A$ and $B$
is redundant, and where
having distinct generalized fixed-point algebras
is impossible. For example, if $G$ is discrete, this will be the case for all actions of $G$.
\end{abstract}

\maketitle

\section{Introduction}\label{intro}

When presented with a
$C^*$-dynamical system,
which we call simply an \emph{action},
one of our first impulses is to
form the crossed product $C^*$-algebra.
A fundamental question arises:
how do we recover the action from the crossed product?
The short answer is:
we 
cannot
the crossed-product $C^*$-algebra is not enough information.
So,
the next questions are:
(1) what extra data do we need to recover the action,
and
(2) in what sense can the action be recovered?
Crossed-product duality is largely devoted to these questions.

We 
consider
actions of a fixed group $G$,
and we want to recover $(A,\alpha)$.
If we are only given $A\rtimes_\alpha G$,
we cannot even recover the $C^*$-algebra $A$ up to Morita equivalence.
In other words, there exist non-Morita-equivalent $C^*$-algebras $A$ and $B$ carrying actions of a group $G$ such that $A\rtimes G\simeq B\rtimes G$.
Instances of this are surprisingly hard to find in the literature,
but for example \cite[Remark~4.3]{adic} shows how to do it with $A$ and $B$ commutative and $G$ discrete abelian.


We will 
assume throughout that $G$ is abelian.
Most of the following is true for nonabelian $G$,
but this involves coactions, which would tend to obscure the heart of the matter.
If $G$ is an abelian group,
$A$ is a $C^*$-algebra,
and $\alpha\:G\to \aut A$ is an action,
then there is an action $\what\alpha$,
called the \emph{dual action},
of the dual group $\what G$ on the crossed product $A\rtimes_\alpha G$,
and the (Takesaki-)Takai theorem says that the crossed product by the dual action
is isomorphic to $A\otimes \KK(L^2(G))$.
Moreover, the double dual action corresponds to
the tensor product of $\alpha$ with conjugation by the right regular representation $\rho$ of $G$.
Thus, Takai duality recovers 
the original action $(A,\alpha)$ of $G$ 
up to Morita equivalence from the dual action $(A\rtimes_\alpha G,\what\alpha)$ of $\what G$.

To recover more about $\alpha$, we need more information about the crossed product.
Raeburn's abstract definition of the crossed product, using universal properties,
gives a covariant homomorphism $(i_A,i_G)$ of $(A,G)$ in the multiplier algebra $M(A\rtimes_\alpha G)$
such that every other covariant homomorphism $(\pi,U)$ in $M(B)$ factors through a homomorphism $\pi\times U\:A\rtimes G\to M(B)$.
Landstad duality says that the action $(A,\alpha)$ can be recovered up to conjugacy (equivariant isomorphism) from the data $(A\rtimes_\alpha G,\what\alpha,i_G)$.
Abstractly, we are given an \emph{equivariant action} $(C,\zeta,V)$ of $\what G$,
i.e., an action $(C,\zeta)$ of $\what G$ and a strictly continuous unitary homomorphism $V\:G\to M(C)$ that is equivariant for $\zeta$ and the action $\alpha^G$ of $\what G$ on $C^*(G)$ determined by $\alpha^G_\gamma(s)=\gamma(s)s$ for $\gamma\in \what G,s\in G$
where we freely identify elements of $G$ with unitaries in $M(C^*(G))$.
The rough idea is to construct $i_A(A)$ as a \emph{generalized fixed-point algebra} $C^{\zeta,V}$ of the equivariant action.

In \cite{koqlandstad} we explored a duality intermediate between Takai and Landstad duality, that we called \emph{outer duality}, but which perhaps deserves to be christened ``Pedersen duality''.
This intermediate duality is based upon Pedersen's theorem:
two actions $\alpha$ and $\beta$ of $G$ on $A$ are exterior equivalent
if and only if there is an isomorphism $\Theta\:(A\rtimes_\alpha G,\what\alpha)\variso (A\rtimes_\beta G,\what\beta)$ such that $\Theta\circ i_A^\alpha=i_A^\beta$.
Escaping from the single $C^*$-algebra $A$, an alternative version of Pedersen's theorem (see \thmref{ped outer} below) says that
two actions $(A,\alpha)$ and $(B,\beta)$ of $G$ are outer conjugate
if and only if there is an isomorphism $\Theta\:(A\rtimes_\alpha G,\what\alpha)\variso (B\rtimes_\beta G,\what\beta)$ such that $\Theta(i_A(A))=i_B(B)$.
Thus, Pedersen shows how to recover the action $(A,\alpha)$ up to outer conjugacy if we know the dual action $(C,\zeta)$ and the generalized fixed-point algebra $C^{\zeta,V}$,
but perhaps not the equivariant homomorphism $V$ itself.

After proving the outer duality theorem, we naturally wanted to give examples exploring the boundaries of Pedersen's theorem.
More precisely, we searched for examples of two actions $(A,\alpha)$ and $(B,\beta)$ of $G$ such that the dual actions $\what\alpha$ and $\what\beta$ are conjugate,
but $\alpha$ and $\beta$ are not outer conjugate.
Equivalently, we want there to be an isomorphism between the dual actions, but not one that preserves the generalized fixed-point algebras.
Somehow surprisingly, we were not able to produce any examples of this phenomenon, and moreover, we discovered a complete absence of such examples in the literature.
This is striking, since it is tempting to conjecture that one of the first questions researchers must have asked about crossed products is,
how much information do we get from only knowing the dual actions?
It seems to us that this investigation is long overdue.

Thus we are led to what we call the \emph{Pedersen rigidity problem}:
either find explicit examples of two non-outer-conjugate actions with conjugate dual actions,
or prove that this cannot happen.
Such a theorem would (at least for abelian groups) result in a significant simplification of Pedersen's theorem, namely removing the clause about the generalized fixed-point algebras;
we suspect that counterexamples do in fact exist.
Our ultimate goal is a definitive answer to this question, but so far we have made only partial progress,
namely we have found a number of ``no-go theorems'':
the phenomenon of 
non-outer-conjugate actions with conjugate dual actions
cannot occur if
(1) $G$ is discrete,
(2) $A$ and $B$ are stable,
(3) $A$ and $B$ are commutative,
(4) $\alpha$ or $\beta$ is inner,
(5) $G$ is compact and $\alpha$ and $\beta$ are faithful and ergodic,
or
(6) $A$ is covered by invariant ideals on which the action is \emph{strongly Pedersen rigid} in the sense that the dual action has only one generalized fixed-point algebra.
In light of these no-go theorems, it is apparent that the phenomenon of multiple generalized fixed-point algebras is delicate.

We begin in \secref{prelim} by recording our conventions regarding actions, crossed products, outer conjugacy, and Pedersen's theorem(s).
In subsequent sections we prove the no-go theorems.
The final such theorem includes the case of locally unitary actions on continuous-trace $C^*$-algebras,
and we explain in a remark that this context was in fact the germ of the idea leading to the last no-go theorem.

Some of this research was done during a visit of the third author to the University of Oslo, and he thanks Erik B\'edos, Nadia Larsen, and Tron Omland for their hospitality.
Other parts of the work were done during the second author's visit to Arizona State University, and he is grateful to the analysis group at ASU for their hospitality.

We thank the referee for comments that improved the quality of this paper.

\section{Preliminaries}\label{prelim}

Throughout, $G$ will be a locally compact abelian group, $A$ a $C^*$-algebra,
and $\alpha\:G\to \aut A$ an action of $G$ on $A$.
Since $G$ will be fixed, we just say that $(A,\alpha)$ is an action. 
We adopt the conventions of \cite[Appendix~A]{enchilada} for actions and crossed products,
and here we recall the basic notation and results we will need.
If $(A,\alpha)$ is an action and $B$ is a $C^*$-algebra,
a \emph{covariant homomorphism} of $(A,\alpha)$ in $M(B)$ is a pair $(\pi,U)$,
where $\pi\:A\to M(B)$ is a nondegenerate homomorphism and $U\:G\to M(B)$ is a strictly continuous unitary homomorphism such that
\[
\pi\circ\alpha_s(a)=\ad U_s\circ\pi(a)=U_s\pi(a)U_s^*\righttext{for all}s\in G,a\in A.
\]
The \emph{crossed product}
of $(A,\alpha)$
is a triple $(A\rtimes_\alpha G,i_A,i_G)$,
where $A\rtimes_\alpha G$ is a $C^*$-algebra
and $(i_A,i_G)$ is a covariant homomorphism of $(A,\alpha)$ in $M(A\rtimes_\alpha G)$ with the \emph{universal property} that for any covariant homomorphism $(\pi,U)$ of $(A,\alpha)$ in $M(B)$ there is a unique nondegenerate homomorphism $\pi\times U\:A\rtimes_\alpha G\to M(B)$,
called the \emph{integrated form} of $(\pi,U)$,
such that
\[
(\pi\times U)\circ i_A=\pi\midtext{and}(\pi\times U)\circ i_G=U.
\]
Sometimes we write $(i_A^\alpha,i_G^\alpha)$ if ambiguity is possible,
and on the other hand we sometimes write $A\rtimes G$ if $\alpha$ is understood.
By definition, the crossed product is unique up to isomorphism in the sense that if $(B,\pi,U)$ is another crossed product then there is a unique isomorphism $\theta\:A\rtimes_\alpha G\variso B$ such that $\theta\circ i_A=\pi$ and $\theta\circ i_G=U$.

Given an action $(A,\alpha)$,
because $G$ is abelian there is a unique action $\what\alpha$ of the dual group $\what G$ on $A\rtimes_\alpha G$ such that
\[
\what\alpha_\gamma\circ i_A=i_A
\midtext{and}
\what\alpha_\gamma\circ i_G(s)=\gamma(s)i_G(s)
\righttext{for all}s\in G,\gamma\in \what G.
\]

A \emph{conjugacy}, or \emph{isomorphism}, between two actions $(A,\alpha)$ and $(B,\beta)$ is an isomorphism $\phi\:A\variso B$ that is $\alpha-\beta$ equivariant in the sense that $\phi\circ\alpha_s=\beta_s\circ\phi$ for all $s\in G$, and $\alpha$ and $\beta$ are \emph{conjugate}, or \emph{isomorphic}, if such a $\phi$ exists.
Given a conjugacy $\phi$, the pair $(i_B\circ\phi,i_G^\beta)$ is a covariant homomorphism of $(A,\alpha)$, and the integrated form $\phi\rtimes G$ is a conjugacy $(A\rtimes_\alpha G,\what\alpha)\variso (B\rtimes_\beta G,\what\beta)$.

\emph{\(Takesaki-\)Takai duality} \cite[Theorem~3.4]{takai} says that
the double dual action
\[
\bigl(A\rtimes_\alpha G\rtimes_{\what\alpha} G,\what{\what\alpha}\bigr)
\]
is conjugate to $(A\otimes\KK(L^2(G)),\id\otimes \ad\rho)$
(the part about the double dual action is not stated in \cite{takai}, but 
appears in \cite[Theorem~7.9.3]{ped}, for example).

Two actions $(A,\alpha)$ and $(B,\beta)$ of $G$ are \emph{Morita equivalent} if there exist
an $A-B$ imprimitivity bimodule $X$ and an $\alpha-\beta$ compatible action $u$ of $G$ on $X$,
i.e., for all $s\in G$ and $x,y\in X$ we have
\begin{align*}
\alpha_s\bigl({}_A\<x,y\>\bigr)&={}_A\<u_s(x),u_s(y)\>
\\
\beta_s\bigl(\<x,y\>_B\bigr)&=\<u_s(x),u_s(y)\>_B.
\end{align*}
By \cite[Remark~7.3]{tfb}, the above properties imply that $u$ is also compatible with the bimodule structure,
i.e.,
\[
u_s(ax)=\alpha_s(a)u_x(x)
\midtext{and}
u_s(xb)=u_s(x)\beta_s(b)
\]
for all $a\in A,x\in X,b\in B$.
If $\alpha$ and $\beta$ are Morita equivalent, then so are the dual actions $\what\alpha$ and $\what\beta$;
for the crossed-product $C^*$-algebras, this Morita equivalence is proved in
\cite[Section~6, Theorem]{com} and \cite[Theorem~1]{cmw},
and the existence of an $\what\alpha-\what\beta$ compatible action on the crossed-product imprimitivity bimodule follows from \cite[Proposition~3.5]{enchilada}
(see also \cite[Lemma~3.3]{KusudaDuality}).
It follows from Takai duality that the double dual action $(A\rtimes_\alpha G\rtimes_{\what\alpha} G,\what{\what\alpha})$ is Morita equivalent to $(A,\alpha)$.

An \emph{inner action} of $G$ on $A$ is one of the form $\ad u$, where $u\:G\to M(A)$ is a strictly continuous unitary homomorphism.
In this case there is a conjugacy
\[
(A\rtimes_\alpha G,\what\alpha)\simeq (A\otimes C^*(G),\id_A\otimes \alpha^G),
\]
where $\alpha^G$ is the unique action of $\what G$ on $C^*(G)$ such that $\alpha^G_\gamma(s)=\gamma(s)s$ for all $\gamma\in \what G,s\in G$.
Note that the above terminology is somewhat inconsistent within the literature. What we call an inner action is sometimes called a ``unitary action''. Moreover, we will still use the notion ``locally unitary action'' later on, e.g.\ in \corref{nogo local}.

\emph{Landstad duality} \cite[Theorem~3]{lan:dual}
(stated for abelian groups in \cite[Theorem~28]{pedersenexterior},
and in somewhat more detail in \cite[Theorem~2.2]{koqlandstad})
says that, for a $C^*$-algebra $C$,
there exist an action $(A,\alpha)$ of $G$ and an isomorphism $\theta\:A\rtimes_\alpha G\variso C$
if and only if there exist an action $\zeta$ of $\what G$ on $C$ and a $\alpha^G-\zeta$ equivariant strictly continuous unitary homomorphism $V\:G\to M(C)$.
Moreover, given such a triple $(C,\zeta,V)$, which we call an \emph{equivariant action} of $\what G$,
the action $(A,\alpha)$ and the isomorphism $\theta$ can be chosen such that $\theta$ is $\what\alpha-\zeta$ equivariant and $\theta\circ i_G=V$;
with such a choice, if $(B,\beta)$ is any action and $\sigma\:V\rtimes_\beta G\variso C$ is a $\what\beta-\zeta$ equivariant isomorphism such that $\sigma\circ i_G=V$,
then there exists a conjugacy $\varphi\:(A,\alpha)\variso (B,\beta)$ such that $\sigma\circ(\varphi\rtimes G)=\theta$.
In fact, we can take $A$ to be the $C^*$-subalgebra of $M(C)$ consisting of all multipliers $m$ satisfying \emph{Landstad's conditions}
\begin{enumerate}
\item $\zeta_\gamma(m)=m$ for all $\gamma\in\what G$;

\item $mV(f),V(f)m\in C$ for all $f\in C_c(G)$;

\item $s\mapsto \ad V_s(m)$ is norm continuous from $G$ to $C$,
\end{enumerate}
and we can let $\alpha$ be the restriction of (the extension to $M(C)$ of) the inner action $\ad V$.
Then, letting $\iota\:A\to M(C)$ be the inclusion map,
the pair $(\iota,V)$ is a universal covariant homomorphism of the action $(A,\ad V)$.
We write $C^{\zeta,V}=A$, and call it the \emph{generalized fixed-point algebra} of the equivariant action $(C,\zeta,V)$.
Note that, given equivariant actions $(C,\zeta,V)$ and $(D,\epsilon,W)$ of $\what G$,
if $\Theta\:(C,\zeta)\variso (D,\epsilon)$ is a conjugacy such that $\Theta\circ V=W$,
then $\Theta(C^{\zeta,V})=D^{\epsilon,W}$.
In \cite[Corollary~2.6]{koqlandstad} we recorded a routine consequence of Landstad duality that we sometimes find useful (which in the following rendition we translate into the context of abelian groups):
if $(C,\zeta,V)$ is an equivariant action of $\what G$
and $\varphi\:A\to C^{\zeta,V}$ is an isomorphism,
then there exist an action $\alpha$ of $G$ on $A$
and a conjugacy
\[
\Theta\:(A\rtimes_\alpha G,\what\alpha)\variso (C,\zeta)
\]
such that
\[
\Theta\circ i_G=V
\midtext{and}
\Theta\circ i_A=\varphi.
\]

A \emph{cocycle} for an action $(A,\alpha)$ is a strictly continuous unitary map $u\:G\to M(A)$ such that $u_{st}=u_s\alpha_s(u_t)$ for all $s,t\in G$.
In this case $\ad u\circ\alpha$ is also an action of $G$ on $A$,
which is called \emph{exterior equivalent} to $\alpha$.
In particular, a strictly continuous unitary homomorphism is a cocycle for the trivial action $\iota$,
and an inner action is exterior equivalent to $\iota$.
Two actions $(A,\alpha)$ and $(B,\beta)$ are \emph{outer conjugate} if $\beta$ is conjugate to an action on $A$ that is exterior equivalent to $\alpha$.
If $\alpha$ and $\beta$ are outer conjugate, then the dual actions $\what\alpha$ and $\what\beta$ are conjugate.
Also, outer conjugacy is stronger than Morita equivalence
(strictly so, because $(A,\alpha)$ and $(B,\beta)$ outer conjugate implies $A\simeq B$).

\emph{Pedersen's theorem} \cite[Theorem~35]{pedersenexterior}
(stated more precisely in \cite[Theorem~0.10]{raeros},
and generalized to nonabelian groups in \cite[Theorem~3.1]{koqlandstad})
says that two actions $\alpha$ and $\beta$ of $G$ on $A$ are exterior equivalent if and only if there is a conjugacy $\Theta\:(A\rtimes_\alpha G,\what\alpha)\variso (A\rtimes_\beta G,\what\beta)$ such that $\Theta\circ i_A^\alpha=i_A^\beta$.
We take this opportunity to record an alternative version, which seems not to be explicitly recorded in the literature,
but which is an immediate consequence of \cite[proof of Theorem~5.9]{koqlandstad}:

\begin{thm}[Pedersen's theorem]\label{ped outer}
Two actions $(A,\alpha)$ and $(B,\beta)$ of $G$ are outer conjugate if and only if there is an isomorphism
\[
\Theta\colon (A\rtimes_\alpha G,\what\alpha)\variso (B\rtimes_\beta G,\what\beta)
\]
such that
\begin{equation}\label{same gfpa}
\Theta(i_A(A))=i_B(B).
\end{equation}
\end{thm}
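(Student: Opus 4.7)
The plan is to reduce each implication to the original version of Pedersen's theorem recalled just above, by absorbing the conjugacy data onto a single $C^*$-algebra so that the classical statement (phrased only in terms of exterior equivalence of two actions on a common $A$) becomes directly applicable. For the forward direction, I would unpack outer conjugacy: $\beta$ is conjugate, via some isomorphism $\phi\:A\variso B$, to an action $\alpha'$ on $A$ that is exterior equivalent to $\alpha$. The original Pedersen theorem applied to $\alpha$ and $\alpha'$ yields a conjugacy $\Psi\:(A\rtimes_\alpha G,\what\alpha)\variso(A\rtimes_{\alpha'}G,\what{\alpha'})$ with $\Psi\circ i_A^\alpha=i_A^{\alpha'}$, while the integrated form $\phi\rtimes G$ is a conjugacy $(A\rtimes_{\alpha'}G,\what{\alpha'})\variso(B\rtimes_\beta G,\what\beta)$ satisfying $(\phi\rtimes G)\circ i_A^{\alpha'}=i_B^\beta\circ\phi$. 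The composition $\Theta=(\phi\rtimes G)\circ\Psi$ is then equivariant and sends $i_A^\alpha(A)$ to $i_B^\beta(\phi(A))=i_B^\beta(B)$, as required.

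For the reverse direction, suppose $\Theta$ satisfies \eqref{same gfpa}. Since $i_A^\alpha$ and $i_B^\beta$ are faithful and $\Theta$ restricts to an isomorphism $i_A^\alpha(A)\variso i_B^\beta(B)$, the formula $\varphi=(i_B^\beta)^{-1}\circ\Theta\circ i_A^\alpha$ defines an isomorphism $\varphi\:A\variso B$. Transport $\alpha$ across $\varphi$ to a new action $\alpha'=\varphi\circ\alpha\circ\varphi^{-1}$ on $B$, so that $(A,\alpha)$ and $(B,\alpha')$ are conjugate by construction. The integrated form $\varphi\rtimes G$ then satisfies $(\varphi\rtimes G)\circ i_A^\alpha=i_B^{\alpha'}\circ\varphi$, and a short diagram chase using this together with the defining relation $\Theta\circ i_A^\alpha=i_B^\beta\circ\varphi$ for $\varphi$ shows that
\[
\Xi:=\Theta\circ(\varphi\rtimes G)^{-1}\:(B\rtimes_{\alpha'}G,\what{\alpha'})\variso(B\rtimes_\beta G,\what\beta)
\]
is a conjugacy satisfying $\Xi\circ i_B^{\alpha'}=i_B^\beta$. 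The original Pedersen theorem applied to $\alpha'$ and $\beta$ on the common algebra $B$ then yields that these two actions are exterior equivalent, and hence $(A,\alpha)$ and $(B,\beta)$ are outer conjugate.

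The only real subtlety is ensuring $\varphi$ is well-defined, which uses the injectivity of $i_A^\alpha$ and $i_B^\beta$ (standard for actions) together with the hypothesis that $\Theta$ carries $i_A(A)$ exactly onto $i_B(B)$; once $\varphi$ is in hand, the rest of the argument is essentially bookkeeping. The conceptual content is that the freedom to change the underlying $C^*$-algebra via a conjugacy is precisely what distinguishes outer conjugacy from exterior equivalence, so pushing this freedom through $\varphi$ lands both actions on the common algebra $B$, where the classical Pedersen theorem immediately applies.
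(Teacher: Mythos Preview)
Your proof is correct. Both directions work as you describe; in particular, the verification that $\Xi\circ i_B^{\alpha'}=i_B^\beta$ follows exactly from the two identities $\Theta\circ i_A^\alpha=i_B^\beta\circ\varphi$ and $(\varphi\rtimes G)\circ i_A^\alpha=i_B^{\alpha'}\circ\varphi$, and the equivariance of $\Xi$ for the dual actions is inherited from that of $\Theta$ and $\varphi\rtimes G$.

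The paper takes a different route for the reverse direction. Rather than restricting $\Theta$ to build an isomorphism $\varphi\:A\variso B$ directly, it transports the unitary homomorphism instead: setting $V=\Theta\circ i_G^\alpha$ makes $(B\rtimes_\beta G,\what\beta,V)$ an equivariant action of $\what G$ whose generalized fixed-point algebra is $\Theta(i_A(A))=i_B(B)$. Landstad duality (in the form of \cite[Corollary~2.6]{koqlandstad}) then produces an action $\zeta$ on $B$ and a conjugacy $\Psi\:(B\rtimes_\zeta G,\what\zeta)\variso(B\rtimes_\beta G,\what\beta)$ satisfying both $\Psi\circ i_G^\zeta=V$ and $\Psi\circ i_B^\zeta=i_B^\beta$. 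The first identity, together with $\Psi\inv\circ\Theta$, gives conjugacy of $\alpha$ and $\zeta$ via Landstad; the second gives exterior equivalence of $\beta$ and $\zeta$ via Pedersen. Your argument is more elementary in that it avoids Landstad duality altogether, working only with the classical Pedersen theorem and the functoriality of $\phi\mapsto\phi\rtimes G$. The paper's route, by contrast, emphasizes the role of the generalized fixed-point algebra, which is natural given that this object is the organizing theme of the paper.
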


\begin{proof}
The forward direction is an obvious corollary of Pedersen's theorem,
so suppose that $\Theta$ is an isomorphism satisfying \eqref{same gfpa}.
Let $V=\Theta\circ i_G^\alpha$.
Then $(B\rtimes_\beta G,\what\beta,V)$ is an equivariant action of $\what G$,
with
\begin{align*}
(B\rtimes_\beta G)^{\what\beta,V}
&=\Theta\bigl((A\rtimes_\alpha G)^{\what\alpha,i_G^\alpha}\bigr)
=\Theta(i_A(A))=i_B^\beta(B),
\end{align*}
and $i_B^\beta\:B\to i_B^\beta(B)$ is an isomorphism,
so by Landstad duality
there are an action $\zeta$ of $G$ on $B$
and a conjugacy
\[
\Psi\:(B\rtimes_\zeta G,\what\zeta)\variso (B\rtimes_\beta G,\what\beta)
\]
such that
\[
\Psi\circ i_G^\zeta=V
\midtext{and}
\Psi\circ i_B^\zeta=i_B^\beta.
\]
Thus by Pedersen's theorem the actions $\beta$ and $\zeta$ are exterior equivalent.

On the other hand, we have a conjugacy
\[
\Psi\inv\circ\Theta\:(A\rtimes_\alpha G,\what\alpha)\variso (B\rtimes_\zeta G,\what\zeta),
\]
taking $i_G^\alpha$ to $i_G^\zeta$,
so again by Landstad duality the actions $\alpha$ and $\zeta$ are conjugate.
Therefore $\alpha$ and $\beta$ are outer conjugate.
\end{proof}

\begin{defn}
We will refer to \eqref{same gfpa} as \emph{Pedersen's condition}.
\end{defn}

Thus, Pedersen's theorem says that two actions of $G$ are outer conjugate if and only if there is a conjugacy between the dual actions satisfying Pedersen's condition.

It is useful to compare Pedersen's theorem (\thmref{ped outer}) to Landstad duality:
it follows from the latter that two actions $(A,\alpha)$ and $(B,\beta)$ of $G$ are conjugate if and only if there 
is
an isomorphism
\[
\Theta\colon (A\rtimes_\alpha G,\what\alpha)\variso (B\rtimes_\beta G,\what\beta)
\]
such that
\[
\Theta\circ i_G^\alpha=i_G^\beta.
\]

\section{The Pedersen rigidity problem}\label{problem}

The alternative form of Pedersen's theorem (\thmref{ped outer})
leads to
\theoremstyle{plain}
\newtheorem*{gfpa*}{The Pedersen rigidity problem}
\begin{gfpa*}
Does there exist an example of two non-outer-conjugate actions $\alpha$ and $\beta$ of $G$ such that the dual actions $\what\alpha$ and $\what\beta$ are conjugate?
Equivalently,
is Pedersen's condition $\Theta(i_A(A))=i_B(B)$ in \thmref{ped outer} redundant?
\end{gfpa*}

We have only found a few references to this question in the literature;
for example,
Buss and Echterhoff say in \cite[Remark~3.13 (e)]{BusEch2} that it ``is not clear to us'' whether ``there might exist two different structures''
giving
``different generalized fixed-point algebras''.

First, we remark the following: suppose that we have an isomorphism $\varphi\colon A\variso B$ and two actions $\beta$ and $\gamma$ on $B$ that are \emph{not} exterior equivalent. Then define an action $\alpha$ on $A$ by setting
\[
\alpha_g(a)=\varphi^{-1}(\beta_g(\varphi(a))).
\]
Then $(A,\alpha)$ and $(B,\gamma)$ are \emph{not} outer conjugate. In other words, there are two ways to produce non-outer-conjugate actions; either via non-isomorphic $C^*$-algebras, or via non-exterior-equivalent actions.

In the following sections we present various no-go theorems,
giving general conditions
under which Pedersen's condition is redundant.
Our no-go theorems seem to come in two flavors, which we characterize via the following definitions:

\begin{defn}
An action $(C,\zeta)$ of $\what G$ is \emph{strongly fixed-point rigid} if it has a unique generalized fixed-point algebra,
i.e., for any two $\what G$-equivariant strictly continuous unitary homomorphisms $V,W\:G\to M(C)$ we have
\[
C^{\zeta,V}=C^{\zeta,W}.
\]
An action $(A,\alpha)$ of $G$ is \emph{strongly Pedersen rigid} if its dual action is strongly fixed-point rigid.
\end{defn}

\begin{defn}
An action $(C,\zeta)$ of $\what G$ is \emph{fixed-point rigid} if
the automorphism group of $(C,\zeta)$ acts transitively on the set of generalized fixed-point algebras,
i.e., for any two $\what G$-equivariant strictly continuous unitary homomorphisms $V,W\:G\to M(C)$
there is an automorphism $\Theta$ of $(C,\zeta)$ such that
\[
\Theta(C^{\zeta,V})=C^{\zeta,W}.
\]
An action $(A,\alpha)$ of $G$ is \emph{Pedersen rigid} if its dual action is fixed-point rigid.
\end{defn}

\begin{lem}\label{fixed point rigid lem}
Let $(C,\zeta,V)$ be an equivariant action of $\what G$, and consider the following properties:
\begin{enumerate}
\item $\zeta$ is strongly fixed-point rigid.

\item For every equivariant action $(D,\epsilon,W)$ of $\what G$,
if
$\Theta\:(C,\zeta)\variso (D,\epsilon)$
is a conjugacy then
$\Theta(C^{\zeta,V})=D^{\epsilon,W}$.

\item $\zeta$ is fixed-point rigid.

\item For every equivariant action $(D,\epsilon,W)$ of $\what G$,
if
$\zeta$ and $\epsilon$ are conjugate then
there is a conjugacy
$\Theta\:(C,\zeta)\variso (D,\epsilon)$
such that
$\Theta(C^{\zeta,V})=D^{\epsilon,W}$.
\end{enumerate}
Then 
$(1)\iff (2) \then (3) \iff (4)$.
\end{lem}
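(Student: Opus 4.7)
The plan is to prove $(1)\iff(2)$, deduce $(2)\then(3)$ immediately from $(1)$, and handle $(3)\iff(4)$ by the same transport-of-structure idea. The single input I will use throughout is the fact recorded just before the lemma: whenever $\Theta\:(C,\zeta)\variso(D,\epsilon)$ is a conjugacy of equivariant actions with $\Theta\circ V=W$, one has $\Theta(C^{\zeta,V})=D^{\epsilon,W}$. In particular, for any $\zeta$-equivariant $V\:G\to M(C)$, the push-forward $\Theta\circ V$ is $\epsilon$-equivariant and $\Theta(C^{\zeta,V})=D^{\epsilon,\Theta\circ V}$.

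For $(1)\then(2)$, given an equivariant $(D,\epsilon,W)$ and a conjugacy $\Theta\:(C,\zeta)\variso(D,\epsilon)$, I set $W'=\Theta\inv\circ W$, which is $\zeta$-equivariant. Strong fixed-point rigidity of $\zeta$ gives $C^{\zeta,V}=C^{\zeta,W'}$, and pushing forward yields $\Theta(C^{\zeta,V})=\Theta(C^{\zeta,W'})=D^{\epsilon,\Theta\circ W'}=D^{\epsilon,W}$. For $(2)\then(1)$, I specialize $(D,\epsilon)=(C,\zeta)$ and $\Theta=\id$, so $(2)$ reads $C^{\zeta,V}=C^{\zeta,W}$ for every $\zeta$-equivariant $W$; running this with arbitrary $V'$ and $W'$ in the role of $W$ and chaining through $C^{\zeta,V}$ yields $C^{\zeta,V'}=C^{\zeta,W'}$, which is $(1)$.

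The implication $(2)\then(3)$ is now immediate: $(2)$ yields $(1)$, and $(1)$ gives $(3)$ by taking $\Theta=\id$. For $(3)\then(4)$, given $(D,\epsilon,W)$ with $\epsilon$ conjugate to $\zeta$, I pick any conjugacy $\Phi\:(C,\zeta)\variso(D,\epsilon)$, pull $W$ back to $W'=\Phi\inv\circ W$, use $(3)$ to produce an automorphism $\Psi$ of $(C,\zeta)$ with $\Psi(C^{\zeta,V})=C^{\zeta,W'}$, and take $\Theta=\Phi\circ\Psi$; pushing forward gives $\Theta(C^{\zeta,V})=\Phi(C^{\zeta,W'})=D^{\epsilon,W}$. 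For $(4)\then(3)$, specialize $(D,\epsilon)=(C,\zeta)$: for each equivariant $W$ I obtain an automorphism $\Theta_W$ of $(C,\zeta)$ with $\Theta_W(C^{\zeta,V})=C^{\zeta,W}$, and then $\Theta_{W'}\circ\Theta_{V'}\inv$ carries $C^{\zeta,V'}$ onto $C^{\zeta,W'}$ for arbitrary equivariant $V',W'$.

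There is no real obstacle; the lemma is a bookkeeping exercise built on the push-forward identity $\Theta(C^{\zeta,V})=D^{\epsilon,\Theta\circ V}$. The only step that deserves a moment of care is $(2)\then(1)$, because $(2)$ keeps $V$ fixed while $(1)$ quantifies over all pairs $(V',W')$; the fix is simply to route every comparison through the distinguished $C^{\zeta,V}$. It is worth noting in passing that my argument actually shows $(2)\then(1)$ directly, so the diagram of implications collapses to $(1)\iff(2)\then(1)\then(3)\iff(4)$, confirming that the lemma cannot be strengthened to $(3)\then(1)$ without further input.
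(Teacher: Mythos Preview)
Your proof is correct and follows essentially the same approach as the paper's: both use the transport identity $\Theta(C^{\zeta,V})=D^{\epsilon,\Theta\circ V}$, pull back $W$ along a given conjugacy for the forward directions, and specialize to $(D,\epsilon,\Theta)=(C,\zeta,\id)$ for the reverse directions. You simply spell out the details of $(3)\iff(4)$ that the paper dismisses as ``similar to $(1)\iff(2)$'', and you make explicit the (trivial) chaining through $C^{\zeta,V}$ in $(2)\then(1)$ that the paper leaves implicit.
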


\begin{proof}
Assume (1), and let $\Theta\:(C,\zeta)\variso (D,\epsilon)$ be a conjugacy.
Then $\Theta\inv\circ W$ is a $\what G$-equivariant strictly continuous unitary homomorphism, so
\begin{align*}
C^{\zeta,V}
&=C^{\zeta,\Theta\inv\circ W}\righttext{(since $(C,\zeta)$ is strongly fixed-point rigid)}
\\&=\Theta\inv\bigl(D^{\epsilon,W}),
\end{align*}
and we have shown (2).
Conversely, $(2)\then (1)$ follows by taking $(D,\epsilon)=(C,\zeta)$ and $\Theta=\id_C$.

$(1)\then (3)$ is trivial, and
$(3)\iff (4)$ is similar to $(1)\iff (2)$.
\end{proof}

\begin{cor}\label{rigid vs outer rigid}
Let $(A,\alpha)$ be an action of $G$, and consider the following properties:
\begin{enumerate}
\item $\alpha$ is strongly Pedersen rigid.

\item For every action $(B,\beta)$ of $G$,
if
$\Theta\:(A\rtimes_\alpha G,\what\alpha)\variso (B\rtimes_\beta G,\what\beta)$
is a conjugacy then
$\Theta(i_A(A))=i_B(B)$.

\item $\alpha$ is Pedersen rigid.

\item If $(B,\beta)$ is any action of $G$, then
$\alpha$ and $\beta$ are outer conjugate if and only if
$\what\alpha$ and $\what\beta$ are conjugate.
\end{enumerate}
Then 
$(1)\iff (2) \then (3) \iff (4)$.
\end{cor}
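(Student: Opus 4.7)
My plan is to deduce this corollary from \lemref{fixed point rigid lem} applied to the equivariant action $(A\rtimes_\alpha G,\what\alpha,i_G^\alpha)$ of $\what G$, whose generalized fixed-point algebra is $i_A(A)$ by Landstad duality. The passage between ``abstract'' equivariant actions $(D,\epsilon,W)$ appearing in the lemma and the ``concrete'' dual actions $(B\rtimes_\beta G,\what\beta,i_G^\beta)$ appearing in the corollary is handled by Landstad duality, which says every equivariant action of $\what G$ arises (up to a conjugacy preserving the unitary homomorphism) from the crossed-product construction applied to some action of $G$.

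For $(1)\iff (2)$: The implication $(1)\then (2)$ is immediate from $(1)\then (2)$ of \lemref{fixed point rigid lem}, applied to any conjugacy $\Theta\:(A\rtimes_\alpha G,\what\alpha)\variso (B\rtimes_\beta G,\what\beta)$ viewed as comparing the equivariant structures $i_G^\alpha,i_G^\beta$. For $(2)\then (1)$, I would take any $\what G$-equivariant strictly continuous unitary homomorphism $V\:G\to M(A\rtimes_\alpha G)$, invoke Landstad duality to produce an action $(B,\beta)$ and a conjugacy $\theta\:(B\rtimes_\beta G,\what\beta)\variso (A\rtimes_\alpha G,\what\alpha)$ with $\theta\circ i_G^\beta=V$, so that $(A\rtimes_\alpha G)^{\what\alpha,V}=\theta(i_B(B))$, and then apply $(2)$ to $\theta\inv$ to conclude $\theta(i_B(B))=i_A(A)$. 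Since $V$ was arbitrary, $\what\alpha$ is strongly fixed-point rigid.

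For $(3)\iff (4)$: The implication ``outer conjugate $\then$ dual conjugate'' in $(4)$ is immediate. Assuming $(3)$, given a conjugacy $\Phi\:(A\rtimes_\alpha G,\what\alpha)\variso (B\rtimes_\beta G,\what\beta)$, I would set $W=\Phi\inv\circ i_G^\beta$, note that $(A\rtimes_\alpha G)^{\what\alpha,W}=\Phi\inv(i_B(B))$, use fixed-point rigidity to obtain an automorphism $\Theta$ of $(A\rtimes_\alpha G,\what\alpha)$ sending $i_A(A)$ onto $\Phi\inv(i_B(B))$, and conclude that $\Phi\circ\Theta$ satisfies Pedersen's condition so that \thmref{ped outer} gives outer conjugacy of $\alpha$ and $\beta$. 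Conversely, assuming $(4)$, given equivariant $V,W$ Landstad duality produces actions $(A_V,\alpha_V),(A_W,\alpha_W)$ whose dual actions are conjugate to $\what\alpha$, so $(4)$ forces each of $\alpha_V,\alpha_W$ to be outer conjugate to $\alpha$; \thmref{ped outer} then provides isomorphisms of the dual actions satisfying Pedersen's condition, which I would compose with the Landstad conjugacies to obtain the required automorphism of $(A\rtimes_\alpha G,\what\alpha)$.

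The main bookkeeping challenge will be the final step of $(4)\then (3)$: correctly chaining the Landstad conjugacies $(A_V\rtimes G,\what{\alpha_V})\variso (A\rtimes_\alpha G,\what\alpha)$ and $(A_W\rtimes G,\what{\alpha_W})\variso (A\rtimes_\alpha G,\what\alpha)$ with the Pedersen isomorphisms from \thmref{ped outer} so that the resulting automorphism actually carries $(A\rtimes_\alpha G)^{\what\alpha,V}$ onto $(A\rtimes_\alpha G)^{\what\alpha,W}$, rather than (as a naive composition would) sending both to $i_A(A)$. Everything else is essentially formal.
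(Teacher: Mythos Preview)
Your proposal is correct and follows essentially the same approach as the paper: apply \lemref{fixed point rigid lem} to $(A\rtimes_\alpha G,\what\alpha,i_G^\alpha)$, using Landstad duality to pass between arbitrary equivariant actions of $\what G$ and dual actions of actions of $G$. The paper streamlines your $(3)\iff(4)$ argument by first using \thmref{ped outer} to rewrite $(4)$ in the exact form of condition~$(4)$ of \lemref{fixed point rigid lem}, so that the lemma applies verbatim and the bookkeeping you flag in $(4)\then(3)$ never has to be carried out explicitly.
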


\begin{proof}
Recall that $i_A(A)=(A\rtimes_\alpha G)^{\what\alpha,i_G^\alpha}$,
and similarly for $i_B(B)$.
By Pedersen's theorem,
condition (4) is equivalent to the following:
for any action $(B,\beta)$ of $G$,
if $\what\alpha$ and $\what\beta$ are conjugate then
there is a conjugacy
$\Theta\:(A\rtimes_\alpha G,\what\alpha)\variso (B\rtimes_\beta G,\what\beta)$
such that
$\Theta(i_A(A))=i_B(B)$.
Now
apply \lemref{fixed point rigid lem}
with $(C,\zeta,V)=(A\rtimes_\alpha G,\what\alpha,i_G^\alpha)$,
keeping in mind that
if $(D,\epsilon,W)$ is any equivariant action of $\what G$,
then by Landstad duality there is an action $(B,\beta)$ of $G$ and a conjugacy $\Psi\:(D,\epsilon)\variso (B\rtimes_\beta G,\what\beta)$ such that $\Psi\circ W=i_G^\beta$.
\end{proof}

Thus, the Pedersen rigidity problem is equivalent to the following:
\begin{gfpa*}[alternative formulation]
Is every action of $G$ Pedersen rigid?
\end{gfpa*}

But now with the stronger type of rigidity, we can ask for more:
\theoremstyle{plain}
\newtheorem*{sgfpa*}{The strong Pedersen rigidity problem}
\begin{sgfpa*}
Is every action of $G$ strongly Pedersen rigid?
\end{sgfpa*}
In fact, our no-go theorems seem to hint that this stronger rigidity might hold.

In the next section, we will discuss the rigidity problem when restricting to certain types of groups and actions, and therefore we introduce the following:

\begin{defn}
We say that 
a class $\CC$ of actions of $G$
is \emph{Pedersen rigid}
if two actions in $\CC$ are outer conjugate if and only if their dual actions are conjugate.
\end{defn}

\begin{rem}
Suppose we are given an action $(C,\zeta)$ of $\what G$.
Let us write $H(G,C,\zeta)$ for the set of all $\what G$-equivariant strictly continuous unitary homomorphisms $V\:G\to M(C)$.
Note that the group $\aut(C,\zeta)$ acts on this set by composition:
\[
(\theta,V)\mapsto \theta\circ V\:\aut(C,\zeta)\times H(G,C,\zeta)\to H(G,C,\zeta).
\]
Now let us write $GFPA(C,\zeta)$ for the set of all generalized fixed-point algebras of the action $(C,\zeta)$.
Then we have a surjection
\[
V\mapsto C^{\zeta,V}\:H(G,C,\zeta)\to GFPA(C,\zeta),
\]
and the action of $\aut(C,\zeta)$ descends to an action on $GFPA(C,\zeta)$.
By definition, the action $(C,\zeta)$ is fixed-point rigid if and only if this action on $GFPA(C,\zeta)$ is transitive.
This leads us to consider a stronger property: can the action of $\aut(C,\zeta)$ on $H(G,C,\zeta)$ be transitive?
We will explain here that the answer is generally negative.

For every $V\in H(G,C,\zeta)$,
by Landstad duality there are
an action $(A,\alpha)$ of $G$
and a conjugacy
\[
\Theta\:(A\rtimes_\alpha G,\what\alpha)\variso (C,\zeta)
\]
such that
\[
\Theta\circ i_G^\alpha=V.
\]
In other words, the elements $V\in H(G,C,\zeta)$ correspond to actions $(A,\alpha)$ of $G$ for which
$(A\rtimes_\alpha G,\what\alpha,i_G^\alpha)=(C,\zeta,V)$.
If
we take another $W\in H(G,C,\zeta)$ and
another action $(B,\beta)$ of $G$ with
$(B\rtimes_\beta G,\what\beta,i_G^\beta)=(C,\zeta,W)$,
then it follows from Landstad duality that
$(A,\alpha)\simeq (B,\beta)$ 
if and only if
there exists $\theta\in \aut(C,\zeta)$ such that $\theta\circ V=W$.
We can arrange for $(B,\beta)$ to be outer conjugate,
but \emph{not conjugate},
to $(A,\alpha)$, and
then there is no automorphism of $(C,\zeta)$ taking $V$ to $W$.

As we mentioned above, our no-go theorems hint at the possibility
that every action of $G$ is actually strongly Pedersen rigid.
If so, then the set $GFPA(C,\zeta)$ would be a singleton, but again we could easily have the action of $\aut(C,\zeta)$ on $H(G,C,\zeta)$ be nontransitive.
\end{rem}

\begin{rem}
The above definition of fixed-point rigidity (as well as strong fixed-point rigidity) can be phrased in terms of actions of an arbitrary locally compact group $\Gamma$ instead of $\what G$,
but then the homomorphism $V\:G\to M(C)$ would have to be replaced by a nondegenerate homomorphism from $C_0(\Gamma)$ to $M(C)$ that is equivariant for $\zeta$ and the action of $\Gamma$ on $C_0(\Gamma)$ given by right translation.

Moreover, \thmref{ped outer} also holds for non-abelian groups, again by applying \cite[proof of Theorem~5.9]{koqlandstad}. Hence, the Pedersen rigidity problem can be formulated for arbitrary locally compact groups as well.
\end{rem}

\begin{rem}
It is interesting to note that the rigidity theory for $C^*$-dynamical systems discussed in this paper bears resemblance to recent works involving diagonal-preserving isomorphisms between graph $C^*$-algebras, see \cite[Theorem~1.5]{Matsumoto} and \cite[Theorem~4.1]{Carlsen-Rout}.
\end{rem}

\section{No-go theorems}

\subsection{Discrete groups}

If $G$ is discrete, then $\what G$ is compact, so the dual action has a genuine fixed-point algebra, and hence all generalized fixed-point algebras 
coincide. Consequently, we get the following result (which we
have not found in the literature):

\begin{prop}[\nogothm]\label{nogo discrete}
If $G$ is discrete,
then 
every action of $G$ is strongly Pedersen rigid.
\end{prop}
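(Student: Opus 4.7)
The plan is to exploit the fact that $G$ discrete forces $\what G$ to be compact, so the dual action $\what\alpha$ on $A\rtimes_\alpha G$ has a literal fixed-point $C^*$-subalgebra, and then show this agrees with every generalized fixed-point algebra. Set $C=A\rtimes_\alpha G$ and $\zeta=\what\alpha$, and write
\[
C^\zeta=\{c\in C:\zeta_\gamma(c)=c\text{ for all }\gamma\in\what G\}
\]
for the literal fixed-point subalgebra. I will show that $C^{\zeta,V}=C^\zeta$ for every $\what G$-equivariant strictly continuous unitary homomorphism $V\:G\to M(C)$. Since this equality is independent of $V$, it is exactly the definition of strong fixed-point rigidity of $\what\alpha$, hence of strong Pedersen rigidity of $\alpha$.

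To establish $C^{\zeta,V}\subseteq C^\zeta$, I would argue as follows. Since $G$ is discrete, $C_c(G)$ contains a function $f_0$ (the characteristic function of $\{e\}$) whose image in $C^*(G)$ is the unit, so the nondegenerate extension of $V$ satisfies $V(f_0)=1_{M(C)}$. Applying Landstad's condition~(2) to $f=f_0$ forces any $m\in C^{\zeta,V}$ to satisfy $m=mV(f_0)\in C$; combined with Landstad's condition~(1), this places $m$ in $C^\zeta$.

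For the reverse inclusion $C^\zeta\subseteq C^{\zeta,V}$, fix $m\in C^\zeta$. Condition~(1) holds by hypothesis. Condition~(2) is automatic since $m\in C$ and $V(f)\in M(C)$ imply $mV(f),V(f)m\in C$. Condition~(3) collapses in the discrete setting: norm continuity of any map out of the discrete group $G$ is free, and $\ad V_s(m)=V_smV_s^*$ lies in $C$ because $V_s\in M(C)$ acts on both sides of $m\in C$. This yields $C^{\zeta,V}=C^\zeta$ independent of $V$, as desired. There is essentially no obstacle here; the main observation is that Landstad's conditions~(2) and~(3) degenerate to trivialities once $C_c(G)$ contains a unit and the topology on $G$ is discrete, reducing the generalized fixed-point algebra to the literal one.
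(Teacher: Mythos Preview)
Your proposal is correct and follows exactly the approach indicated in the paper, which simply observes that for discrete $G$ the dual group $\what G$ is compact, so the dual action has a genuine fixed-point algebra with which all generalized fixed-point algebras coincide. You have filled in the details of that observation---using that $\delta_e\in C_c(G)$ maps to the unit under $V$ to force $C^{\zeta,V}\subseteq C$, and noting that conditions~(2) and~(3) are automatic for elements of $C$ when $G$ is discrete---but the underlying idea is the same.
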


Thus, by Pedersen's theorem, if
$G$ is discrete,
then
two actions of $G$ 
are outer conjugate if and only if
the dual actions are conjugate.

\subsection{Stable $C^*$-algebras}

If $(A\rtimes_\alpha G,\what{\alpha}) \simeq (B\rtimes_\beta G,\what\beta)$,
then by Takai duality
the actions $\alpha$ and $\beta$ must at least be Morita equivalent.

Moreover, \cite[Section~8 Proposition]{com} says that if $A$ and $B$ are stable and have strictly positive elements (which is satisfied if they are separable, for example) then $\alpha$ and $\beta$ are Morita equivalent if and only if they are outer conjugate.
This leads to

\begin{prop}[\nogothm]\label{nogo stable}
The class of actions of $G$ on stable $C^*$-algebras possessing strictly positive elements is Pedersen rigid.
\end{prop}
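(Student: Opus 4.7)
The plan is to assemble three ingredients already laid out in the preliminaries: the easy direction that outer-conjugate actions have conjugate dual actions; Takai duality, in particular its consequence that any action is Morita equivalent to its double dual; and the Combes result \cite[Section~8 Proposition]{com} asserting that for actions on stable $C^*$-algebras with strictly positive elements, Morita equivalence coincides with outer conjugacy.

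The forward direction is immediate: if $(A,\alpha)$ and $(B,\beta)$ are outer conjugate, then the preliminaries already tell us that $\what\alpha$ and $\what\beta$ are conjugate. For the converse, I start with a conjugacy $\Phi\colon (A\rtimes_\alpha G,\what\alpha)\variso (B\rtimes_\beta G,\what\beta)$ and cross by $\what G$ to obtain a conjugacy of the double dual actions,
\[
\Phi\rtimes \what G\colon \bigl(A\rtimes_\alpha G\rtimes_{\what\alpha}\what G,\what{\what\alpha}\bigr)\variso \bigl(B\rtimes_\beta G\rtimes_{\what\beta}\what G,\what{\what\beta}\bigr),
\]
which in particular is a Morita equivalence. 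Composing with the Morita equivalences between each action and its own double dual, as recorded in the preliminaries as a consequence of Takai duality, gives $(A,\alpha)\morita (B,\beta)$. The stability and strictly-positive-element hypotheses on $A$ and $B$ then let \cite[Section~8 Proposition]{com} promote this Morita equivalence to outer conjugacy, which is what we wanted.

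The argument encounters essentially no obstacle, since the heavy lifting is external: both the Takai step and the Combes step are cited wholesale. The only point deserving mild care is verifying that the hypotheses of the cited Combes proposition are met by both $A$ and $B$ simultaneously, but this is precisely the assumption defining the class under consideration. I expect the proof to be only a few lines.
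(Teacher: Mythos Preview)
Your argument is correct and matches the paper's approach exactly: the paper does not even write out a formal proof, instead deriving the proposition directly from the two observations preceding it (conjugate dual actions force Morita equivalence via Takai duality, and Combes's result upgrades Morita equivalence to outer conjugacy under the stable-with-strictly-positive-element hypotheses). Your write-up simply spells out these steps a bit more explicitly.
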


Thus
(assuming, for example, that we restrict our attention to separable $C^*$-algebras)
we will not find any examples of multiple generalized fixed-point algebras unless at least one of $A$ and $B$ is nonstable. This indicates that the phenomenon of multiple generalized fixed-point algebras is delicate in some sense, since it would not be possible, for example, if we replace the original actions by their double duals, since double crossed products are always stable.

\subsection{Commutative $C^*$-algebras}

For commutative algebras, we have a stronger version of \propref{nogo stable}.

\begin{prop}[\nogothm]\label{nogo commutative}
If $A$ and $B$ are commutative,
then actions $(A,\alpha)$ and $(B,\beta)$ of $G$ 
are conjugate if and only if
the dual actions are conjugate.
In particular, 
the class of actions of $G$ on commutative $C^*$-algebras is Pedersen rigid.
\end{prop}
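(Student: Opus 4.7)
The plan is to prove the stronger statement (that conjugate dual actions already force conjugate original actions) by going through Morita equivalence and then using the fact that for commutative $C^*$-algebras, $G$-equivariant Morita equivalence reduces to $G$-equivariant isomorphism.

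First I would dispense with the easy direction: if $\alpha$ and $\beta$ are conjugate via $\phi\:A\variso B$, the standard construction $\phi\rtimes G$ from the preliminaries gives a conjugacy $(A\rtimes_\alpha G,\what\alpha)\variso (B\rtimes_\beta G,\what\beta)$.

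For the nontrivial direction, suppose $\what\alpha$ and $\what\beta$ are conjugate. In particular they are Morita equivalent, and then so are their dual actions $\what{\what\alpha}$ and $\what{\what\beta}$ (by the general Morita-equivalence-passes-to-duals result recorded in \secref{prelim}, citing \cite{com,cmw,enchilada}). Now Takai duality exhibits $(A,\alpha)$ as $G$-equivariantly Morita equivalent to $(A\rtimes_\alpha G\rtimes_{\what\alpha} G,\what{\what\alpha})$, and similarly for $(B,\beta)$. Composing these three equivariant Morita equivalences yields a $G$-equivariant Morita equivalence between $(A,\alpha)$ and $(B,\beta)$.

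Next I would invoke the Rieffel correspondence: any $A$-$B$ imprimitivity bimodule induces a homeomorphism $\Prim A\variso \Prim B$, and in the presence of a compatible $G$-action on the bimodule this homeomorphism is $G$-equivariant (since the bimodule operations intertwine the actions). Because $A=C_0(X)$ and $B=C_0(Y)$ are commutative, $\Prim A$ and $\Prim B$ coincide with the Gelfand spectra $X$ and $Y$, with their actions induced by $\alpha$ and $\beta$. Thus we obtain a $G$-equivariant homeomorphism $X\variso Y$, which dualizes to a $G$-equivariant isomorphism $\phi\:A\variso B$, showing that $\alpha$ and $\beta$ are conjugate. Since conjugacy is stronger than outer conjugacy, Pedersen rigidity of this class follows immediately.

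The only delicate step is verifying that the Rieffel correspondence is $G$-equivariant in our setting; this is where I would have to cite or check carefully that a compatible action $u$ on the imprimitivity bimodule translates the induced map on ideals into a $G$-equivariant homeomorphism of spectra. Everything else is assembly of pieces already in the preliminaries.
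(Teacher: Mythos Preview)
Your proposal is correct and follows essentially the same approach as the paper: the paper also reduces to showing that Morita equivalent actions on commutative $C^*$-algebras are conjugate, and then invokes the $G$-equivariance of the Rieffel homeomorphism between the spectra $\what B\simeq\what A$ (citing \cite[Corollary~6.27]{rie:induced}) to obtain the equivariant isomorphism. The only difference is cosmetic---the paper compresses the ``conjugate duals $\Rightarrow$ Morita equivalent originals'' step into a single sentence, whereas you spell out the Takai-duality chain explicitly.
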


\begin{proof}
It suffices to show that if the actions $\alpha$ and $\beta$ are Morita equivalent then they are conjugate.
Suppose we have an $\alpha-\beta$ equivariant $A-B$ imprimitivity bimodule.
Then the associated Rieffel homeomorphism
$\what B\simeq \what A$
\cite[Corollary~6.27]{rie:induced}
is $G$-equivariant,
and this gives an $\alpha-\beta$ equivariant isomorphism $A\simeq B$.
\end{proof}

\begin{rem}
If we have an action $(A,\alpha)$ and we know that $A$ is commutative, then the dual action $\what\alpha$ contains all the information about $\alpha$ (up to conjugacy), so weaker forms of equivalence are of interest. In a recent paper by Li \cite{LiCOER}, a notion of continuous orbit equivalence for topological dynamical systems is discussed:

Let $X$ and $Y$ be locally compact Hausdorff spaces on which $G$ acts. Then $(X,G)$ and $(Y,G)$ are said to be \emph{continuously orbit equivalent} if there exists a homeomorphism $\varphi\colon X\to Y$ and continuous maps $a\colon G\times X\to G$ and $b\colon G\times Y\to G$ such that $\varphi(g\cdot x)=a(g,x)\cdot\varphi(x)$ and $\varphi^{-1}(g\cdot y)=b(g,y)\cdot\varphi^{-1}(y)$. This is clearly weaker than conjugacy (which is obtained by setting $a(g,x)=b(g,y)=g$).

Moreover, if $G$ is discrete and the actions are topologically free, then \cite[Theorem~1.2]{LiCOER} says that $(X,G)$ and $(Y,G)$ are continuously orbit equivalent if and only if there exists an isomorphism $\Phi\colon C_0(X)\rtimes G\to C_0(Y)\rtimes G$ satisfying Pedersen's condition.
\end{rem}

The no-go theorems Propositions~\ref{nogo stable} and \ref{nogo commutative} have consequences for equivariant actions:

\begin{cor}\label{stable commutative}
Let $(C,\zeta)$ be an action of $\what G$,
and let $V,W\:G\to M(C)$ be $\what G$-equivariant strictly continuous unitary homomorphisms.
If both $C^{\zeta,V}$ and $C^{\zeta,W}$ are stable and have strictly positive elements,
or both are commutative,
then there is a $\zeta$-equivariant automorphism of $C$ that takes $C^{\zeta,V}$ to $C^{\zeta,W}$.
\end{cor}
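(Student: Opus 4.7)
The plan is to translate the two equivariant-action data $(C,\zeta,V)$ and $(C,\zeta,W)$ into abstract actions of $G$ via Landstad duality, apply the appropriate no-go theorem there, and then transport the resulting isomorphism back to an automorphism of $C$.

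First, I would apply Landstad duality to each of $(C,\zeta,V)$ and $(C,\zeta,W)$. This produces actions $(A,\alpha)$ and $(B,\beta)$ of $G$, together with conjugacies
\[
\Theta_V\colon (A\rtimes_\alpha G,\what\alpha)\variso (C,\zeta),\qquad \Theta_W\colon (B\rtimes_\beta G,\what\beta)\variso (C,\zeta)
\]
such that $\Theta_V\circ i_G^\alpha=V$ and $\Theta_W\circ i_G^\beta=W$. By the remark in the Landstad-duality discussion of \secref{prelim}, we have
\[
\Theta_V(i_A(A))=C^{\zeta,V}\midtext{and}\Theta_W(i_B(B))=C^{\zeta,W}.
\]
In particular, $A\simeq C^{\zeta,V}$ and $B\simeq C^{\zeta,W}$, so $A$ and $B$ inherit the hypothesis (both stable with strictly positive elements, or both commutative).

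Next, the composite $\Theta_W\inv\circ\Theta_V\:(A\rtimes_\alpha G,\what\alpha)\variso(B\rtimes_\beta G,\what\beta)$ is a conjugacy of dual actions. By Takai duality this forces $(A,\alpha)$ and $(B,\beta)$ to be Morita equivalent (via their double duals). Now I would invoke the relevant no-go theorem: in the stable case \propref{nogo stable} yields that $\alpha$ and $\beta$ are outer conjugate, while in the commutative case \propref{nogo commutative} gives the stronger conclusion that they are conjugate. Either way, by \thmref{ped outer} there is an isomorphism
\[
\Psi\colon (A\rtimes_\alpha G,\what\alpha)\variso(B\rtimes_\beta G,\what\beta)
\]
satisfying Pedersen's condition $\Psi(i_A(A))=i_B(B)$.

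Finally, I would define $\Phi:=\Theta_W\circ\Psi\circ\Theta_V\inv$. This is a $\zeta$-equivariant automorphism of $C$, and
\[
\Phi(C^{\zeta,V})=\Theta_W\bigl(\Psi(i_A(A))\bigr)=\Theta_W(i_B(B))=C^{\zeta,W},
\]
completing the proof. There is really no main obstacle here beyond organizing the references correctly: the substantive content is already contained in \propref{nogo stable} and \propref{nogo commutative}, and this corollary is essentially a translation of those statements into the language of equivariant actions via Landstad duality and \thmref{ped outer}.
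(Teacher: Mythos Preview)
Your proposal is correct and follows essentially the same route as the paper: apply Landstad duality to $(C,\zeta,V)$ and $(C,\zeta,W)$ to obtain actions $(A,\alpha)$ and $(B,\beta)$ with conjugacies to $(C,\zeta)$, use \propref{nogo stable} or \propref{nogo commutative} to conclude outer conjugacy, extract via \thmref{ped outer} a conjugacy $\Psi$ satisfying Pedersen's condition, and conjugate back to get the desired automorphism of $C$. The only cosmetic difference is your detour through Takai duality and Morita equivalence, which is harmless but unnecessary since the no-go propositions are already phrased in terms of conjugate dual actions.
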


\begin{proof}
By Landstad duality, there exist actions $(A,\alpha)$ and $(B,\beta)$ of $G$
and
conjugacies
\[
\xymatrix{
(A\rtimes_\alpha G,\what\alpha) \ar[r]^-\theta_-\simeq
&(C,\zeta)
&(B\rtimes_\beta G,\what\beta) \ar[l]_-\psi^-\simeq
}
\]
such that
\[
\theta\circ i_G^\alpha=V,
\quad
\theta(i_A(A))=C^{\zeta,V},
\quad
\psi\circ i_G^\beta=W,
\quad
\psi(i_B(B))=C^{\zeta,W}.
\]
Then
we have a conjugacy
\[
\psi\inv\circ\theta\:
(A\rtimes_\alpha G,\what\alpha)
\variso
(B\rtimes_\beta G,\what\beta),
\]
and hence,
by Propositions~\ref{nogo stable} and \ref{nogo commutative},
$(A,\alpha)$ and $(B,\beta)$ are outer conjugate.
Thus there exists a conjugacy
\[
\sigma\:
(A\rtimes_\alpha G,\what\alpha)
\variso
(B\rtimes_\beta G,\what\beta)
\]
(possibly different from $\psi\inv\circ\theta$)
such that
\[
\sigma(i_A(A))=i_B(B).
\]
Then $\psi\circ\sigma\circ\theta\inv$
is a $\zeta$-equivariant automorphism of $C$
such that
\begin{align*}
\psi\circ\sigma\circ\theta\inv(C^{\zeta,V})
&=\psi\circ\sigma(i_A(A))
=\psi(i_B(B))
=C^{\zeta,W}.
\qedhere
\end{align*}
\end{proof}

\subsection{Inner actions}

Any inner action $\alpha=\ad u$ determined by a unitary homomorphism $u\:G\to M(A)$
is exterior equivalent to the trivial action $\iota$.
Thus we can take the dual actions to be the same:
\[
(A\rtimes_\alpha G,\what\alpha)=(A\rtimes_\iota G,\what\iota).
\]
The homomorphism $i_G^\iota\:G\to M(A\rtimes_\iota G)$ maps into the center,
and hence it commutes with $i_G^\alpha\:G\to M(A\rtimes_\iota G)$.
Therefore, by
\cite[Lemma~1.6]{QR95}
(see also \cite[Proposition~3.12]{BusEch2} for a slightly more general result)
the generalized fixed-point algebras coincide:
\[
(A\rtimes_\alpha G)^{\what\alpha,i_G^\alpha}=(A\rtimes_\iota G)^{\what\iota,i_G^\iota}.
\]
By transitivity, if we are given another action $(B,\beta)$ such that
\[
(A\rtimes_\alpha G,\what\alpha)\simeq (B\rtimes_\beta G,\what\beta),
\]
then with a bit more work we will have:

\begin{prop}[\nogothm]\label{nogo inner}
Every inner action is strongly Pedersen rigid.
\end{prop}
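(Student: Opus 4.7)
The plan is to first reduce to the case of the trivial action and then exploit centrality together with the commuting-homomorphisms lemma indicated in the discussion just above. Since $\alpha=\ad u$ is exterior equivalent to the trivial action $\iota$, the classical form of Pedersen's theorem recalled in \secref{prelim} produces a conjugacy
\[
\Theta\:(A\rtimes_\alpha G,\what\alpha)\variso (A\rtimes_\iota G,\what\iota)
\]
with $\Theta\circ i_A^\alpha=i_A^\iota$. For any $\what\alpha$-equivariant strictly continuous unitary homomorphism $V\:G\to M(A\rtimes_\alpha G)$, the composite $\Theta\circ V$ is $\what\iota$-equivariant, and $\Theta$ carries the generalized fixed-point algebra of $V$ onto that of $\Theta\circ V$ (Landstad's three conditions transport naturally along $\Theta$). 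Hence it suffices to establish strong Pedersen rigidity for $\iota$.

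Next I would note that $i_G^\iota\:G\to M(A\rtimes_\iota G)$ takes values in the center of $M(A\rtimes_\iota G)$: the covariance relation with $\iota$ trivial forces $i_G^\iota(s)$ to commute with every element of $i_A^\iota(A)$, and $i_G^\iota(G)$ is self-commuting because $G$ is abelian; since $A\rtimes_\iota G$ is generated by $i_A^\iota(A)\cup i_G^\iota(G)$, centrality of $i_G^\iota$ in $M(A\rtimes_\iota G)$ follows. Consequently, if $V\:G\to M(A\rtimes_\iota G)$ is \emph{any} $\what\iota$-equivariant strictly continuous unitary homomorphism, then $V(s)$ commutes with $i_G^\iota(t)$ for all $s,t\in G$ automatically.

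With $V$ and $i_G^\iota$ both equivariant and commuting pointwise, \cite[Lemma~1.6]{QR95} gives
\[
(A\rtimes_\iota G)^{\what\iota,V}=(A\rtimes_\iota G)^{\what\iota,i_G^\iota}=i_A^\iota(A).
\]
Since $V$ was arbitrary, every generalized fixed-point algebra of $\what\iota$ equals $i_A^\iota(A)$, so $\iota$ is strongly Pedersen rigid; transporting through $\Theta$ yields the same for $\alpha$. The only step beyond routine bookkeeping is observing that centrality of $i_G^\iota$ upgrades the argument from the specific pair $(i_G^\alpha,i_G^\iota)$ already treated in the preceding discussion to a uniform statement over all equivariant $V$, which is precisely what the strong (as opposed to plain) rigidity condition demands.
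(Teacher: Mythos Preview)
Your proof is correct and follows essentially the same approach as the paper: both exploit the centrality of $i_G^\iota$ in $M(A\rtimes_\iota G)$ together with \cite[Lemma~1.6]{QR95} to conclude that all generalized fixed-point algebras coincide. The only cosmetic difference is that you work directly with the definition of strong Pedersen rigidity (reducing to $\iota$ and then comparing an arbitrary equivariant $V$ with $i_G^\iota$), whereas the paper routes through the equivalent characterization~(2) of \corref{rigid vs outer rigid} and an auxiliary action $(B,\beta)$.
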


\begin{proof}
We will apply \lemref{rigid vs outer rigid} $(1)\iff (2)$.
Let $(A,\alpha)$ and $(B,\beta)$ be actions of $G$
such that $\alpha$ is inner and $\what\alpha$ is conjugate to $\what\beta$.
Up to isomorphism, we can take
\[
(A\rtimes_\iota G,\what{\iota})=(A\rtimes_\alpha G,\what{\alpha})
=(B\rtimes_\beta G,\what\beta).
\]
Then we have three equivariant unitary homomorphisms
\[
i_G^\iota,i_G^\alpha,i_G^\beta\:G\to M(A\rtimes_\iota G).
\]
The first one maps into the center, and hence commutes with the other two.
Thus
\[
(A\rtimes_\alpha G)^{\what\alpha,i_G^\alpha}
=(A\rtimes_\iota G)^{\what\iota,i_G^\iota}
=(B\rtimes_\beta G)^{\what\beta,i_G^\beta}.
\]
Now we revert back to the original situation,
without the ``up to isomorphism'' reduction,
and we find that any isomorphism $\Theta\:(A\rtimes_\alpha G,\what\alpha)\variso (B\rtimes_\beta G,\what\beta)$ will preserve the generalized fixed-point algebras:
\[
\Theta\bigl((A\rtimes_\alpha G)^{\what\alpha,i_G^\alpha}\bigr)
=(B\rtimes_\beta G)^{\what\beta,i_G^\beta}.
\qedhere
\]
\end{proof}

\begin{rem}
\propref{nogo inner} could be generalized by using the full force of \cite[Proposition~3.12]{BusEch2}:
given two actions $(A,\alpha)$ and $(B,\beta)$,
if there is an isomorphism $\Theta\:(A\rtimes_\alpha G,\what\alpha)\variso (B\rtimes_\beta G,\what\beta)$ such that the two sets of products
\[
\Theta(i_G^\alpha(C^*(G)))i_G^\beta(C^*(G))\midtext{and}i_G^\beta(C^*(G))\Theta(i_G^\alpha(C^*(G)))
\]
coincide, then $\alpha$ and $\beta$ are outer conjugate.
\end{rem}

\subsection{Ergodic actions of compact groups}

Suppose that $G$ is compact,
$A$ is a unital $C^*$-algebra,
and $\alpha$ is an action of $G$ on $A$ that is ergodic,
that is, the fixed-point algebra is $A^\alpha=\C 1$.
We refer to
\cite{OlePedTak}
for the following facts concerning ergodic actions of compact abelian groups.
For each $\gamma\in\what G$ let $A_\gamma$ be the associated spectral subspace
\[
A_\gamma=\{a\in A:\alpha_s(a)=\gamma(s)a\text{ for all }s\in G\}.
\]
We impose the further hypothesis that
the action $\alpha$ is faithful (i.e., $\alpha_s=\alpha_t$ implies $s=t$),
and consequently
\[
A_\gamma\ne \{0\}\righttext{for all}\gamma\in \what G.
\]
Then for all $\gamma\in\what G$ there is a unitary $u_\gamma\in A$ such that
\[
A_\gamma=\C u_\gamma.
\]
Moreover, there is a 2-cocycle $\omega\:\what G\times \what G\to\T$ such that
\begin{equation}\label{omega-rep}
u_\gamma u_\chi=\omega(\gamma,\chi)u_{\gamma\chi};
\end{equation}
unitary-valued maps $u$ of $G$ satisfying \eqref{omega-rep} are called
\emph{$\omega$-representations}
(see \cite[Appendix~D.3]{danacrossed}, for example),
and we say that $\omega$ is \emph{compatible} with $(A,\alpha)$.
Recall that a map
$\omega\:\what G\times \what G\to\T$
is called a \emph{2-cocycle}
if
\[
\omega(\gamma,\chi)\omega(\gamma\chi,\sigma)
=\omega(\gamma,\chi\sigma)\omega(\chi,\sigma)
\righttext{for all}\gamma,\chi,\sigma\in \what G.
\]
We will assume that all our 2-cocycles are \emph{normalized}, namely
\[
\omega(1,\gamma)=\omega(\gamma,1)=1\righttext{for all}\gamma\in \what G.
\]
The set $Z^2(\what G,\T)$ of all 2-cocycles
is an abelian group under pointwise multiplication.
A cocycle $\omega\in Z^2(\what G,\T)$ is called a \emph{coboundary} if there is a map $z\:\what G\to\T$ such that
\[
\omega(\gamma,\chi)=z(\gamma)z(\chi)\bar{z(\gamma\chi)}
\righttext{for all}\gamma,\chi\in \what G.
\]
The set $B^2(\what G,\T)$ of all coboundaries is a subgroup of $Z^2(\what G,\T)$,
and the quotient group
$H^2(\what G,\T):=Z^2(\what G,\T)/B^2(\what G,\T)$
is the \emph{second cohomology group} of $\what G$ with values in $\T$.
Two 2-cocycles $\omega$ and $\zeta$ of $\what G$ are called \emph{cohomologous} if
$\omega B^2(\what G,\T)=\zeta B^2(\what G,\T)$,
i.e.,
there is a map $z\:\what G\to\T$ such that
\[
\omega(\gamma,\chi)=z(\gamma)z(\chi)\bar{z(\gamma\chi)}\zeta(\gamma,\chi)\righttext{for all}\gamma,\chi\in \what G.
\]
Faithful ergodic actions of $G$ are classified by $H^2(\what G,\T)$ in the following sense:
two faithful ergodic actions $(A,\alpha)$ and $(B,\beta)$ of $G$,
with compatible 2-cocycles $\omega$ and $\zeta$,
are conjugate if and only if $\omega$ and $\zeta$ are cohomologous.

Let 
$\alpha$ be a faithful ergodic action of $G$ on $A$,
let $\omega$ be an $\alpha$-compatible cocycle,
and let
$\lambda^\omega$ be the associated left regular $\omega$-representation of $\what G$ on $\ell^2(\what G)$,
given by
\[
(\lambda^\omega_\gamma \xi)(\chi)=\omega(\gamma,\bar\gamma\chi)\xi(\bar\gamma \chi)\righttext{for}\gamma,\chi\in\what G,\xi\in \ell^2(\what G).
\]
Then there is an isomorphism
\[
(A\rtimes_\alpha G,\what\alpha)\simeq (\KK(\ell^2(\what G)),\ad\lambda^\omega)
\]
(see \cite[Remark~6.7]{OlePedTak}).

\begin{prop}[\nogothm]\label{nogo ergodic}
If $G$ is compact, then
faithful ergodic actions $(A,\alpha)$ and $(B,\beta)$ of $G$
are conjugate if and only if the dual actions are conjugate.
In particular,
the class of faithful ergodic actions of $G$ is Pedersen rigid.
\end{prop}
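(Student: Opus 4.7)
The plan is to reduce the statement to the classification theorem (recalled just before the proposition) that faithful ergodic actions of compact abelian $G$ are classified up to conjugacy by $H^2(\what G,\T)$, and to the explicit realization $(A\rtimes_\alpha G,\what\alpha)\simeq (\KK(\ell^2(\what G)),\ad\lambda^\omega)$ when $\omega$ is compatible with $\alpha$. The forward direction is automatic: any conjugacy $\varphi\:(A,\alpha)\variso(B,\beta)$ integrates to a conjugacy $\varphi\rtimes G$ of the dual actions.

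For the converse, let $\omega$ and $\zeta$ be 2-cocycles compatible with $\alpha$ and $\beta$. It suffices to show that $\omega$ and $\zeta$ are cohomologous. Transferring an assumed conjugacy of dual actions across the two realizations produces an automorphism $\Theta$ of $\KK(\ell^2(\what G))$ satisfying
\[
\Theta\circ\ad\lambda^\omega_\gamma=\ad\lambda^\zeta_\gamma\circ\Theta \all \gamma\in\what G.
\]
Every automorphism of $\KK(\HH)$ is inner, so $\Theta=\ad U$ for a unitary $U$ on $\ell^2(\what G)$, and the intertwining relation becomes $\ad(U\lambda^\omega_\gamma U^*)=\ad\lambda^\zeta_\gamma$. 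Thus there is a function $c\:\what G\to\T$ with
\[
U\lambda^\omega_\gamma U^*=c(\gamma)\lambda^\zeta_\gamma \all \gamma\in\what G.
\]

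Now I would compute the cocycle of the projective representation $V_\gamma:=c(\gamma)\lambda^\zeta_\gamma=U\lambda^\omega_\gamma U^*$ in two ways. On one hand, $V$ is unitarily equivalent to the $\omega$-representation $\lambda^\omega$, so $V_\gamma V_\chi=\omega(\gamma,\chi)V_{\gamma\chi}$. On the other hand, expanding directly and using $\lambda^\zeta_\gamma\lambda^\zeta_\chi=\zeta(\gamma,\chi)\lambda^\zeta_{\gamma\chi}$ gives $V_\gamma V_\chi=c(\gamma)c(\chi)\bar{c(\gamma\chi)}\zeta(\gamma,\chi)V_{\gamma\chi}$. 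Since $V_{\gamma\chi}$ is unitary, comparing yields
\[
\omega(\gamma,\chi)=c(\gamma)c(\chi)\bar{c(\gamma\chi)}\zeta(\gamma,\chi) \all \gamma,\chi\in\what G,
\]
which is precisely the statement that $\omega$ and $\zeta$ are cohomologous. By the classification, $\alpha$ and $\beta$ are conjugate, proving the first assertion; the ``in particular'' clause then follows since outer conjugacy is weaker than conjugacy.

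I do not expect a serious obstacle: the substance is imported from \cite{OlePedTak} in the form of the classification and the explicit model for the dual action, and the remaining work is the projective-representation bookkeeping above. The only point that deserves care is ensuring that cocycles of projective representations are unitary-equivalence invariants, which here amounts to the observation that $V_{\gamma\chi}$ is nonzero, so that the two expressions for $V_\gamma V_\chi$ really force pointwise equality of scalars.
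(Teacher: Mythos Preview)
Your proof is correct and follows essentially the same approach as the paper. The only difference is organizational: the paper isolates the projective-representation computation (your two-way calculation of $V_\gamma V_\chi$) as a separate lemma stated for arbitrary discrete groups, whereas you inline it directly into the proof.
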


\begin{proof}
It suffices to prove that if $(A,\alpha)$ and $(B,\beta)$ are faithful ergodic actions of $G$
such that
\[
(A\rtimes_\alpha G,\what\alpha)\simeq (B\rtimes_\beta G,\what\beta),
\]
then $(A,\alpha)\simeq (B,\beta)$.
Let $\omega$ and $\zeta$ be cocycles of $\what G$ compatible with $\alpha$ and $\beta$, respectively.
By \lemref{cocycle actions conjugate} below,
the cocycles $\omega$ and $\zeta$ are cohomologous,
and hence the actions $(A,\alpha)$ and $(B,\beta)$ are conjugate.
\end{proof}

We thank Magnus Landstad for conversations that ultimately led to the above no-go theorem.

In the above proof we referred to the following lemma, which we state in abstract form, for a possibly nonabelian discrete group.
The result is contained in
\cite[Proposition~D.27]{danacrossed}, but we give a short proof for convenience.

\begin{lem}\label{cocycle actions conjugate}
Let $\omega_1$ and $\omega_2$ be 2-cocycles of a discrete group $\Gamma$,
and let $\pi_i$ be an $\omega_i$-representation for $i=1,2$.
Suppose that the actions $(\KK(\ell^2(\Gamma)),\ad\pi_1)$ and $(\KK(\ell^2(\Gamma)),\ad\pi_2)$ are conjugate.
Then $\omega_1$ and $\omega_2$ are cohomologous.
\end{lem}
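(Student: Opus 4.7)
The plan is to reduce the statement to the classical observation that two unitaries implementing the same inner automorphism of $\KK(\ell^2(\Gamma))$ differ by a scalar in $\T$, combined with the fact that every automorphism of $\KK(\ell^2(\Gamma))$ is itself inner.

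First, I would pick a conjugacy $\phi\:(\KK(\ell^2(\Gamma)),\ad\pi_1)\variso(\KK(\ell^2(\Gamma)),\ad\pi_2)$ and invoke the classical fact that every automorphism of $\KK(\ell^2(\Gamma))$ is spatially implemented, so $\phi=\ad U$ for some unitary $U\in B(\ell^2(\Gamma))$. The equivariance condition $\phi\circ\ad\pi_1(\gamma)=\ad\pi_2(\gamma)\circ\phi$ then reads $\ad(U\pi_1(\gamma))=\ad(\pi_2(\gamma)U)$ for every $\gamma\in\Gamma$, which produces a function $c\:\Gamma\to\T$ satisfying
\[
\pi_2(\gamma)U=c(\gamma)U\pi_1(\gamma)\righttext{for all}\gamma\in\Gamma.
\]

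Next, I would expand $\pi_2(\gamma)\pi_2(\chi)U$ in two ways. Using the $\omega_2$-cocycle relation for $\pi_2$ and applying the intertwining relation once yields $\omega_2(\gamma,\chi)c(\gamma\chi)U\pi_1(\gamma\chi)$, while iterating the intertwining relation twice and then using the $\omega_1$-cocycle relation for $\pi_1$ yields $c(\gamma)c(\chi)\omega_1(\gamma,\chi)U\pi_1(\gamma\chi)$. Cancelling the unitary $U\pi_1(\gamma\chi)$ produces
\[
\omega_1(\gamma,\chi)=\bar{c(\gamma)}\,\bar{c(\chi)}\,c(\gamma\chi)\,\omega_2(\gamma,\chi),
\]
so setting $z(\gamma):=\bar{c(\gamma)}$ exhibits $\omega_1$ and $\omega_2$ as cohomologous in the sign convention of the paper.

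Nothing here constitutes a real difficulty: both background facts about $\KK(\ell^2(\Gamma))$ are standard, and the rest is careful bookkeeping of scalars. The one subtlety worth checking is the coboundary convention, so that the correct choice of $z$ (namely $\bar c$ rather than $c$) appears at the end.
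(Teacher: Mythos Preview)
Your proof is correct and is essentially the same argument as the paper's: both use that automorphisms of $\KK(\ell^2(\Gamma))$ are inner to write the conjugacy as $\ad U$, extract a scalar function from the fact that $U\pi_1(\gamma)$ and $\pi_2(\gamma)U$ implement the same inner automorphism, and then compare the two expansions of $\pi_2(\gamma)\pi_2(\chi)$ to obtain the coboundary relation. The only cosmetic difference is that the paper introduces the auxiliary $\omega_1$-representation $\pi_1'(g)=U\pi_1(g)U^*$ before doing the bookkeeping, whereas you work directly with the intertwining relation $\pi_2(\gamma)U=c(\gamma)U\pi_1(\gamma)$; the computations are identical up to this repackaging, and your check of the sign convention (taking $z=\bar c$) is correct.
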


\begin{proof}
Since all automorphisms of $\KK(\ell^2(\Gamma))$ are inner, we can choose a unitary $u\in\UU(\ell^2(\Gamma))$ such that
\[
\pi_2(g)uxu^*\pi_2(g)^*=u\pi_1(g)x\pi_1(g)^*u^*
\]
for all $g\in\Gamma$ and all $x\in\KK(\ell^2(\Gamma))$.
This means that 
\[
\pi_1(g)^*u^*\pi_2(g)ux=x\pi_1(g)^*u^*\pi_2(g)u,
\]
for all $g\in\Gamma$ and all $x\in\KK(\ell^2(\Gamma))$, hence there are scalars $z(g)\in\C$ such that
\[
\pi_1(g)^*u^*\pi_2(g)u=z(g)1
\]
for all $g\in\Gamma$, i.e., such that
\[
\pi_2(g)=z(g)u\pi_1(g)u^*
\]
for all $g\in\Gamma$.
Define the $\omega_1$-projective representation $\pi_1'$ by $\pi_1'(g)=u\pi_1(g)u^*$ for all $g\in\Gamma$.
Then $\pi_2(g)=z(g)\pi_1'(g)$ for all $g\in\Gamma$, and
\begin{align*}
\omega_2(g,h)z(gh)\pi_1'(gh)
&=\omega_2(g,h)\pi_2(gh)
\\&=\pi_2(g)\pi_2(h)
\\&=z(g)z(h)\pi_1'(g)\pi_1'(h)
\\&=z(g)z(h)\omega_1(g,h)\pi_1'(gh).
\end{align*}
for all $g,h\in\Gamma$. Hence,
\[
\omega_2(g,h)=z(g)z(h)\overline{z(gh)}\omega_1(g,h)
\]
for all $g,h\in\Gamma$, so $\omega_1$ and $\omega_2$ are cohomologous.
\end{proof}

\subsection{Local rigidity}

\begin{prop}[\nogothm]\label{nogo span}
Let $(A,\alpha)$ be an action,
and let $\II$ be a family of $\alpha$-invariant ideals of $A$
such that $A=\clspn \II$.
If for each $I\in\II$ the restricted action $\alpha^I$ is strongly Pedersen rigid,
then $\alpha$ is strongly Pedersen rigid.
\end{prop}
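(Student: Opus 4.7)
The plan is to verify strong Pedersen rigidity of $\alpha$ via \corref{rigid vs outer rigid} $(1)\iff(2)$: given any action $(B,\beta)$ of $G$ and any conjugacy $\Theta\colon (A\rtimes_\alpha G,\what\alpha) \variso (B\rtimes_\beta G,\what\beta)$, I will prove that $\Theta(i_A(A)) = i_B(B)$.

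I begin by transferring the ideal structure via $\Theta$. For each $I\in\II$, the ideal $J_I:=I\rtimes_{\alpha^I} G$ of $A\rtimes_\alpha G$ is $\what\alpha$-invariant, and under the standard correspondence between invariant ideals and dual-invariant crossed-product ideals, $\Theta(J_I) = I^*\rtimes_{\beta^{I^*}} G$ for a unique $\beta$-invariant ideal $I^*\subseteq B$. Since the correspondence preserves closed sums and $\clspn\{J_I:I\in\II\}=A\rtimes_\alpha G$, we get $B=\clspn\{I^*:I\in\II\}$. Moreover $\Theta$ restricts to a conjugacy $\Theta|_{J_I}\colon (J_I,\what{\alpha^I})\variso(I^*\rtimes G,\what{\beta^{I^*}})$, and strong Pedersen rigidity of $\alpha^I$ together with \corref{rigid vs outer rigid} $(1)\Rightarrow(2)$ applied to $\alpha^I$ gives
\[
\widetilde{\Theta|_{J_I}}\bigl(i_I(I)\bigr) = i_{I^*}(I^*) \quad\text{in } M(I^*\rtimes G),
\]
where the tilde denotes the strict extension to multiplier algebras.

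The key step is to promote this ``ideal-by-ideal'' equality to the global statement $\Theta(i_A(I))=i_B(I^*)$ inside $M(B\rtimes_\beta G)$. For this I introduce, for an ideal $J\normal E$, the ``localizing'' subspace $\LL(J,E):=\{m\in M(E):mE\subseteq J,\ Em\subseteq J\}$. The covariance relations together with $I\normal A$ give $i_A(I)\subseteq\LL(J_I,A\rtimes_\alpha G)$, hence $\Theta(i_A(I))\subseteq\LL(I^*\rtimes G,B\rtimes_\beta G)$; and similarly $i_B(I^*)\subseteq\LL(I^*\rtimes G,B\rtimes_\beta G)$. A short essential-ideal argument (using that $I^*\rtimes G$ has trivial self-annihilator and that $B\rtimes_\beta G$ is essential in its multiplier algebra) shows that the restriction map $\LL(I^*\rtimes G,B\rtimes_\beta G)\to M(I^*\rtimes G)$ is injective. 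For each $b\in I$, choose $b^*\in I^*$ with $\widetilde{\Theta|_{J_I}}(i_I(b))=i_{I^*}(b^*)$; then both $\Theta(i_A(b))$ and $i_B(b^*)$ restrict to $i_{I^*}(b^*)$, so injectivity forces $\Theta(i_A(b))=i_B(b^*)$, and hence $\Theta(i_A(I))=i_B(I^*)$.

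Finally, taking closed linear spans and using norm-continuity of $\Theta$ yields
\[
\Theta(i_A(A)) = \clspn\{\Theta(i_A(I)):I\in\II\} = \clspn\{i_B(I^*):I\in\II\} = i_B(B),
\]
as required. The main obstacle is the localization step: the naive approach of trying to show $m|_{J_I}\in J_I^{\what{\alpha^I},V^I}$ for each $m\in(A\rtimes_\alpha G)^{\what\alpha,V}$ fails---simple commutative examples show that $m|_{J_I}V^I(f)$ need not lie in $J_I$, so Landstad's condition (2) breaks down on the ideal---and the $\LL(J,E)$ machinery is precisely what lets the argument proceed at the multiplier level without ever restricting generalized fixed-point algebras directly.
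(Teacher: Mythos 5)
Your proof is correct and follows essentially the same route as the paper's: transfer each $\what\alpha$-invariant ideal $I\rtimes G$ through $\Theta$, identify its image as $I^*\rtimes G$ via the invariant-ideal correspondence (which the paper makes explicit by citing \cite[Theorem~3.4]{gl} for existence and \cite[Propositions~9 and~11]{gre:local} for span-preservation and uniqueness, all valid since abelian $G$ is amenable), apply strong Pedersen rigidity of $\alpha^I$ through \corref{rigid vs outer rigid}, and conclude by taking closed spans. The only difference is your $\LL(J,E)$ and injective-restriction machinery, which carefully justifies the step the paper compresses into the one-line remark that $i_I$ is the restriction of $i_A$ to $I$ and $i_{J_I}=i_B|_{J_I}$ --- a worthwhile elaboration of the same argument, not a different approach.
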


\begin{proof}
Let $(B,\beta)$ be an action, and suppose that we have a conjugacy
\[
\Theta\:(A\rtimes_\alpha G,\what\alpha)\variso (B\rtimes_\beta G,\what\beta).
\]
By \lemref{rigid vs outer rigid}, it suffices to show that $\Theta(i_A(A))=i_B(B)$.
Let $I\in\II$, and let $\alpha^I$ denote the restricted action on $I$.
Then $I\rtimes_{\alpha^I} G$ is an $\what\alpha$-invariant ideal of $A\rtimes_\alpha G$,
so $K:=\Theta(I\rtimes G)$ is a $\what\beta$-invariant ideal of $B\rtimes_\beta G$.
Then by \cite[Theorem~3.4]{gl}
(since $G$ is amenable)
there is a $\beta$-invariant ideal $J_I$ of $B$ such that
\[
(J_I\rtimes G,\what{\beta^{J_I}})=(K,\what\beta^K).
\]
Thus, by \lemref{rigid vs outer rigid} again,
\[
\Theta(i_I(I))=i_{J_I}(J_I).
\]
But
$i_I$ is the restriction of $i_A$ to the ideal $I$,
and similarly
$i_{J_I}=i_B|_{J_I}$,
so we have
\[
\Theta(i_A(I))=i_B(J_I).
\]

Now, we have a family of $\beta$-invariant ideals $\{J_I\}_{I\in\II}$ of $B$,
and
\begin{align*}
B\rtimes_\beta G
&=\Theta(A\rtimes_\alpha G)
\\&=\Theta\left(\clspn_{I\in\II} I\rtimes G\right)
\\&=\clspn_{I\in\II}\Theta(I\rtimes G)
\\&\hspace{.5in}\text{(since $\Theta$ is an isomorphism between $C^*$-algebras)}
\\&=\clspn_{I\in\II}J_I\rtimes G,
\end{align*}
so by \lemref{ideal span} below we have $B=\clspn_{I\in\II}J_I$.
Therefore,
\begin{align*}
\Theta(i_A(A))
&=\Theta\left(i_A\Bigl(\clspn\{I:I\in\II\}\Bigr)\right)
\\&=\clspn\left\{\Theta(i_A(I)):I\in\II\right\}
\\&=\clspn\left\{i_B(J_I):I\in\II\right\}
\\&=i_B\left(\clspn\{J_I:I\in\II\}\right)
\\&=i_B(B).
\qedhere
\end{align*}
\end{proof}

In the above proof we referred to the following general lemma, which is presumably folklore:

\begin{lem}\label{ideal span}
Let $(B,\beta)$ be an action, and let $\{J_I\}_{I\in\II}$ be a family of $\beta$-invariant ideals of $B$ such that
\[
B\rtimes_\beta G=\clspn_{I\in\II}J_I\rtimes G.
\]
Then $B=\clspn_{I\in\II}J_I$.
\end{lem}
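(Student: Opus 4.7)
The plan is to define $J := \clspn_{I\in\II} J_I$, which is a closed $\beta$-invariant ideal of $B$ (since a closed span of $\beta$-invariant ideals is again a $\beta$-invariant ideal), and then to show $J = B$ by checking that $J\rtimes_\beta G = B\rtimes_\beta G$ and invoking the injectivity of the map that sends a $\beta$-invariant ideal of $B$ to its crossed product.

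First I would verify that $J\rtimes_\beta G = B\rtimes_\beta G$ inside $B\rtimes_\beta G$. For each $I\in\II$, the inclusion $J_I\subseteq J$ induces an inclusion $J_I\rtimes G \subseteq J\rtimes G$ as ideals of $B\rtimes_\beta G$. Taking the closed span over $I\in\II$ yields
\[
B\rtimes_\beta G = \clspn_{I\in\II} J_I\rtimes G \;\subseteq\; J\rtimes G \;\subseteq\; B\rtimes_\beta G,
\]
where the first equality is the hypothesis. Hence all three sets coincide.

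Then I would conclude by the bijective correspondence between $\beta$-invariant ideals of $B$ and $\what\beta$-invariant ideals of $B\rtimes_\beta G$, provided by \cite[Theorem~3.4]{gl} (which applies because $G$, being abelian, is amenable); in particular that map is injective, so from $J\rtimes G = B\rtimes G$ we deduce $J = B$, which is the desired conclusion. There is no real obstacle here: the entire content of the lemma is the injectivity of the crossed-product map on invariant ideals, which was already the key tool used in the previous proof, so invoking it here is the natural move.
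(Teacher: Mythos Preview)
Your proof is correct and follows essentially the same approach as the paper: define $J=\clspn_{I\in\II}J_I$, show $J\rtimes G=B\rtimes_\beta G$, and conclude $J=B$ by injectivity of the crossed-product map on invariant ideals. The only cosmetic differences are that the paper cites \cite[Proposition~9]{gre:local} for the equality $J\rtimes G=\clspn_I J_I\rtimes G$ (which you obtain directly via inclusions) and \cite[Proposition~11]{gre:local} rather than \cite[Theorem~3.4]{gl} for the injectivity step.
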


\begin{proof}
Put
\[
J=\clspn_{I\in\II}J_I.
\]
Then $J$ is a $\beta$-invariant ideal of $B$,
and
\begin{align*}
J\rtimes G
&=\clspn_{I\in\II}J_I\rtimes G\righttext{(by \cite[Proposition~9]{gre:local})}
\\&=B\rtimes_\beta G,
\end{align*}
and so we must have $J=B$,
by
\cite[Proposition~11]{gre:local}.
\end{proof}

We will apply \propref{nogo span} to locally unitary actions.
Recall from \cite[Section~7.5]{tfb} that an action $(A,\alpha)$ is called \emph{locally unitary} if $A$ is continuous-trace and for each $\pi\in\what A$ there exist a compact neighborhood $F$ of $\pi$ and a strictly continuous unitary homomorphism $u\:G\to M(A^F)$
(where $A^F$ is the corresponding quotient of $A$)
such that
$\alpha^F=\ad u$
(where $\alpha^F$ denotes the quotient action of $G$).
Since $\what A$ is locally compact Hausdorff
it is equivalent to require that
there exist an open set $N$ containing $\pi$
and a strictly continuous unitary homomorphism $u\:G\to M(A_N)$
(where $A_N$ is the corresponding ideal of $A$)
such that $\alpha=\ad u$ on $A_N$.
To see this, note that, given such a compact neighborhood $F$,
we could take $N$ to be the interior of $F$, and then,
identifying $A$ with the algebra of continuous sections of the associated $C^*$-bundle that vanish at infinity,
the corresponding ideal $A_N$ is the set of sections vanishing outside $N$,
while conversely given an open set $N$,
since $\what A$ is locally compact Hausdorff we can assume without loss of generality that $N$ has compact closure $F$,
and then the corresponding quotient $A^F$ is the set of continuous sections on the restricted $C^*$-bundle over $F$.
Then by \propref{nogo inner} the restricted actions on the $A_N$'s are strongly Pedersen rigid, and $A$ is the closed span of the $A_N$'s. Thus by \propref{nogo span} we have

\begin{cor}\label{nogo local}
Any locally unitary action on a continuous-trace $C^*$-algebra is strongly Pedersen rigid.
\end{cor}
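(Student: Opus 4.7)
The plan is to assemble the corollary directly from the two preceding no-go theorems: \propref{nogo inner} (inner actions are strongly Pedersen rigid) and \propref{nogo span} (strong Pedersen rigidity on a spanning family of invariant ideals propagates to the whole algebra). The key observation is that ``locally unitary'' is essentially the condition that the continuous-trace algebra is a closed span of $\alpha$-invariant ideals on which $\alpha$ is literally inner, so the two hypotheses for \propref{nogo span} fit together with no further work.

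Concretely, I would proceed as follows. First, invoke the equivalent characterization of locally unitary actions given in the paragraph preceding the corollary: for each $\pi\in\what A$ there exist an open neighborhood $N$ of $\pi$ and a strictly continuous unitary homomorphism $u\:G\to M(A_N)$ such that $\alpha|_{A_N}=\ad u$. Let $\II$ be the family of ideals $A_N$ arising this way; each $A_N$ is automatically $\alpha$-invariant because it is the kernel of the quotient map onto the complementary quotient, which is $\alpha$-invariant.

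Second, I would verify that $A=\clspn\II$. The open sets $N$ appearing in the definition cover $\what A$, and, using the description of a continuous-trace $C^*$-algebra as sections vanishing at infinity of its Dixmier-Douady bundle over $\what A$, the ideal $A_N$ consists precisely of sections supported in $N$. A standard partition-of-unity argument on $\what A$ (which is locally compact Hausdorff) shows that any section of compact support lies in a finite sum of such $A_N$, so the closed span of $\II$ contains every compactly supported section and hence all of $A$.

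Third, apply \propref{nogo inner} to each $I=A_N\in\II$: since $\alpha^I=\ad u$ is inner, it is strongly Pedersen rigid. Then \propref{nogo span} applies verbatim with this family $\II$, yielding strong Pedersen rigidity of $\alpha$. The only non-mechanical step is the spanning claim, which is the mild obstacle; once it is in place the corollary is a one-line composition of the previous two no-go results, requiring no genuinely new input beyond the reinterpretation of ``locally unitary'' as an invariant-ideal decomposition along which $\alpha$ becomes inner.
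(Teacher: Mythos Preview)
Your proposal is correct and follows essentially the same approach as the paper: invoke the open-set reformulation of ``locally unitary'' to obtain a family of $\alpha$-invariant ideals $A_N$ on which $\alpha$ is inner, apply \propref{nogo inner} to each, and then apply \propref{nogo span}. The paper asserts the spanning claim $A=\clspn_N A_N$ without further comment, while you supply the partition-of-unity justification, but otherwise the arguments are identical.
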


\begin{q}
Is there 
an analogue of
\propref{nogo span} for quotients,
namely that if $\II$ is a family of $\alpha$-invariant ideals such that
$\bigcap \II=\{0\}$
and for each $I\in\II$ the quotient action on $A/I$ is strongly Pedersen rigid, then $\alpha$ is strongly Pedersen rigid?
This would allow us to strengthen \corref{nogo local} to pointwise unitary actions.
\end{q}

\begin{rem}
Our original motivation for studying the rigidity question for locally (or pointwise) unitary actions was \cite[Proposition~2.5]{phiraelocally}, which characterizes exterior equivalence for locally unitary actions in terms of isomorphism of the associated principal $\what G$-bundles
\[
\what{A\rtimes G}\to \what A
\]
(and \cite[Corollary~1.11]{olerae} is a version for pointwise unitary actions).
These characterizations of exterior equivalence made us wonder whether we could find examples of different generalized fixed-point algebras using these types of actions, but subsequent investigation led us instead to the above no-go theorem.
Thus, we have additional negative evidence for the existence of examples, 
alternatively, additional evidence in support of an affirmative answer to the Pedersen rigidity problem.
\end{rem}


\providecommand{\bysame}{\leavevmode\hbox to3em{\hrulefill}\thinspace}
\providecommand{\MR}{\relax\ifhmode\unskip\space\fi MR }
\providecommand{\MRhref}[2]{%
  \href{http://www.ams.org/mathscinet-getitem?mr=#1}{#2}
}
\providecommand{\href}[2]{#2}

\end{document}